\newtheorem{theorem}{\bf Theorem}[section]
\newtheorem{corollary}{\bf Corollary}[section]
\newtheorem{lemma}{\bf Lemma}[section]
\newtheorem{proposition}{\bf Proposition}[section]
\theoremstyle{definition}
\newtheorem{definition}{\bf Definition}[section]
\theoremstyle{remark}
\newtheorem{remark}{\bf Remark}[section]
\numberwithin{equation}{section}
\def\C{\mathbb C}
\def\N{\mathbb N}
\def\Q{\mathbb Q}
\def\R{\mathbb R}
\def\Z{\mathbb Z}
\def\p{\mathfrak p}
\def\P{\mathfrak P}
\def\D{\mathfrak D}
\def\be{\begin{equation}}
\def\ee{\end{equation}}
\def\bea{\begin{eqnarray}}
\def\eea{\end{eqnarray}}
\def\beas{\begin{eqnarray*}}
\def\eeas{\end{eqnarray*}}
\begin{document}

\title[Uniform boundedness of Fourier partial sum operators]
{Uniform boundedness of the Fourier partial sum operators on the weighted spaces of local fields}

\author{Md. Nurul Molla}
\address{(Md. N. Molla) Statistics and Mathematics Unit, Indian Statistical Institute, 203 B. T. Road, Kolkata, 700108, India}
\email{bittu8bnc@gmail.com}	
	
\author{Biswaranjan Behera}
\address{(B. Behera) Statistics and Mathematics Unit, Indian Statistical Institute, 203 B.\,T. Road, Kolkata, 700108, India}
\email{biswa@isical.ac.in}


\subjclass{Primary: 43A70; Secondary: 42B25, 43A25}
\keywords{Weighted norm inequalities, Fourier series, maximal function, $A_p$ weights, totally disconnected groups, local fields, shift-invariant space, Schauder basis}

\begin{abstract}
Let $S_n f$ be the $n$th partial sum of the Fourier series of a function $f$ in $L^1(\D)$, where $\D$ is the ring of integers of a local field $K$. For $1<p<\infty$, we characterize all weight functions $w$ so that the partial sum operators $S_n$, $n\geq 0$, are uniformly bounded on the weighted space $L^p(\D, w)$ and that $S_n f$ converges to $f$ in $L^p(\D,w)$. This includes the case where $K$ is a $p$-adic number field or a field of formal Laurent series $\mathbb{F}_q((X))$ over a finite field $\mathbb{F}_q$, and in particular, when $\D$ is the Walsh-Paley or dyadic group $2^\omega$. As an application, in a local field $K$ of positive characteristic, we provide a necessary and sufficient condition on a function $\varphi\in L^2(K)$ for which the collection of translates of $\varphi$ forms a Schauder basis for its closed linear span. Moreover, we establish sharp bounds for the Hardy-Littlewood maximal operator. 
\end{abstract}
	
\maketitle

\section{Introduction and Main Results}
A field $K$ equipped with a topology is called a local field if both the additive and multiplicative groups of $K$ are locally compact abelian groups. The locally compact, nondiscrete, complete fields have been completely characterized and are either connected or are totally disconnected. If the field is connected, then it is either the field of real or complex numbers. The totally disconnected fields can be of characteristic zero or positive. If $K$ is of characteristic zero, then it is either  the $p$-adic field $\Q_p$ for some prime $p$, or a finite extension of such a field. If $K$ has positive characteristic, then it is a field of formal power series $\mathbb{F}_q((X))$ over a finite field $\mathbb{F}_q$, where $q=p^c$. If $c=1$, then it is a $p$-series field and if $c>1$, then $K$ is an algebraic extension of degree $c$ of a $p$-series field. For various aspects of harmonic analysis on local fields, we refer the reader to the monograph by Taibleson~\cite{Taib}. For the details of the classification of local fields, see Chapter~1 of~\cite{Taib} and Chapter~4 of~\cite{RV}.

Local fields have proved to be an ideal and effective setting for various problems in Euclidean harmonic analysis, wavelet analysis, etc. For example, for problems related to Carleson's theorem and time-frequency analysis, the study of Walsh-Fourier series on the ring of integers $\mathfrak{D}$ of the $2$-series field $\mathbb{F}_2((X))$ (which is the classical Walsh-Paley or dyadic group $2^\omega$) turned out to be very significant (see, e.g.,~\cite{DP,DL}). Recently, there have been considerably increased interest to study local field versions of several other problems in Euclidean harmonic analysis, geometric measure theory, nonlinear analysis, etc. In~\cite{Papa}, Papadimitropoulos proved the existence of Salem sets in the ring of integers of any local field and studied the Fourier restriction phenomenon on such sets. He also proved optimal extension of the Hausdorff-Young inequality in the local field setting (see~\cite{MR}). For further discussion on Fourier restriction and Kakeya problems over the local field $\Q_p$, see~\cite{HW}. Recently, Fan, Fan, Liao and Shi proved the Fugelede's conjecture on $\mathbb{Q}_p$ (see~\cite{FFLS} and~\cite{FFS}). Fan~\cite{Fan} and Shi~\cite{Shi} have extensively studied spectral measures and their variants on local fields. 

In this article, we study the weighted norm inequalities for the partial sum operators of Fourier series on the ring of integers of a general local field. Such inequalities were obtained for the circle group by Hunt, Muckenhoupt and Wheeden in~\cite{HMW} and for Vilenkin groups by Young in~\cite{You}. As we explain later, the techniques used in~\cite{HMW} and~\cite{You} are not adaptable to our setting. 

We now briefly present some notation which will allow us to state our results. For more details, we refer to Section~\ref{s.prelim}. Let $K$ be a local field. Then, there is an integer $q=p^c$, where $p$ is a prime and $c$ is a positive integer, and a natural norm $|\cdot|$ on $K$ such that $|x|=q^l$ for some integer $l$ if $x\in K$ and $x\neq 0$ and $|0|=0$. Let $dx$ be a Haar measure on the additive group $K^+$. Then, $d(\alpha x)=|\alpha|\,dx$ for all $\alpha\in K^*=K\setminus\{0\}$. Let $\D=\{x\in K:|x|\leq 1\}$ be the ring of integers in $K$. We normalize the Haar measure so that $|\D|=\int_\D 1\,dx=1$. Let $\Lambda=\{u(n):n\geq 0\}$ be a complete set of distinct coset representatives of $\D$ in $K$. Let $\chi$ be a fixed character of $K$ which is trivial on $\D$ but non-trivial on $\P^{-1}=\{x\in K :|x|\leq q\}$. For $y\in K$, define $\chi_y(x)=\chi(yx)$. Then, it follows that $\{\chi_{u(n)}: n\geq 0\}$ is a complete set of distinct characters on $\D$ so that it forms an orthonormal basis for $L^2(\D)$. A function $f$ on $K$ is said to be \emph{$\Lambda$-periodic} if $f\bigl(x+u(n)\bigr)=f(x)$ for all $n\geq 0$ and for a.e.\,$x\in K$.   

For $n \geq 0$, the $n$th Fourier coefficient of a function $f\in L^1(\D)$ is defined by 
\[
\hat{f}\bigl(u(n)\bigr)=\int_{\D}f(x)\overline{\chi_{u(n)}(x)}\,dx.
\]
The Fourier series of $f$ is 
\[
f(x)\sim\sum_{n=0}^\infty\hat{f}\bigl(u(n)\bigr)\chi_{u(n)}(x).
\]
For $n\geq 0$, let $S_n f$ be the $n$th partial sum of the Fourier series of $f$:
\[
S_n f(x)=\sum_{k=0}^{n-1}\hat{f}\bigl(u(k)\bigr)\chi_{u(k)}(x).
\]
A weight $w$ on $K$ is a locally integrable function on $K$ which is positive almost everywhere. The spaces $L^p(w)$ and $L^p(\D,w)$ are the usual spaces of $p$-integrable functions on $K$ and $\D$ respectively with respect to the measure $w(x)\,dx$. 

In~\cite{Taib1} and~\cite{Taib}, for $1<p<\infty$, Taibleson proved that the partial sum operators $S_n, n\geq 0$, are uniformly bounded on $L^p(\D)$ and that $S_n f$ converges to $f$ in $L^p(\D)$. In this article, we prove a weighted version of this result. We characterize all weights $w$ on $K$ such the operators $S_n$, $n\geq 0$, are uniformly bounded on the weighted spaces $L^p(\D,w)$, $1<p<\infty$, and that $S_n f$ converges to $f$ in $L^p(\D,w)$ for $1<p<\infty$. 

Our main result in this article is the following.

\begin{theorem}\label{main}
Let $K$ be a local field, $w$ be a $\Lambda$-periodic weight function on $K$ and $1<p<\infty$. Then, the following statements are equivalent.
\begin{enumerate}
\item[(a)] There is a positive constant $C$ such that
\begin{equation}\label{nec}
\Bigl(\frac{1}{|B|}\int_{B}w(x)\,dx\Bigr)\Bigl(\frac{1}{|B|}\int_{B}w(x)^{-\frac{1}{p-1}}\,dx\Bigr)^{p-1}\leq C
\end{equation}
for every ball $B$ in $K$.
\item[(b)] There is a positive constant $C_{p,w}$, depending only on $w$ and $p$, such that for every $f\in L^p(\D,w)$, we have $f\in L^1(\D)$ and 
\begin{equation}\label{weightS_nf}
\int_{\D}|S_{n}f(x)|^p w(x)\,dx\leq C_{p,w}\int_{\D}|f(x)|^pw(x)\,dx
\end{equation}
for $n=0,1,2,\dots$.
\item[(c)] For every $\Lambda$-periodic function $f\in L^p(\D,w)$, we have $f\in L^1(\D)$ and $S_nf\rightarrow f$ in $L^p(\D,w)$ as $n\rightarrow\infty$. 
\end{enumerate}
\end{theorem}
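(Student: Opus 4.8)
The plan is to prove the cyclic chain $(a)\Rightarrow(b)\Rightarrow(c)\Rightarrow(a)$, with the analytic heart in $(a)\Rightarrow(b)$. Throughout I would exploit the defining structural feature of a local field: every ball in $K$ is a coset $a+\P^k$ of some $\P^k$, so that the $A_p$-type condition \eqref{nec} over all balls is equivalent to the same condition over all cosets of the groups $\P^k$, $k\in\Z$; moreover, by the $\Lambda$-periodicity of $w$ and the fact that the translates $u(n)+\D$ tile $K$, the balls larger than $\D$ reduce to $\D$ itself, so \eqref{nec} amounts to a family of inequalities indexed by $k\ge 0$ and by the cosets of $\P^k$ inside $\D$. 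The second basic fact I would use is that the partial sums at the indices $q^k$ are conditional expectations, $S_{q^k}g=E_kg$, where $E_kg(x)=q^k\int_{x+\P^k}g(y)\,dy$ is the average of $g$ over the coset $x+\P^k$, and $E_k$ is precisely the orthogonal projection onto the characters of index $<q^k$. For $(a)\Rightarrow(b)$ I would first note that \eqref{nec} forces $w^{-1/(p-1)}\in L^1(\D)$, so that Hölder's inequality yields the continuous inclusion $L^p(\D,w)\hookrightarrow L^1(\D)$; this gives the ``$f\in L^1(\D)$'' clause and makes $\hat f$ and $S_nf$ meaningful. Next I would establish the weighted maximal inequality: under \eqref{nec} the Hardy--Littlewood maximal operator $M$ is bounded on $L^p(\D,w)$, which here follows from the nested coset (martingale) structure together with a Vitali/Calder\'on--Zygmund covering adapted to the $\P^k$. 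Since $|E_kf|\le E_k|f|\le Mf$ pointwise, this already yields \eqref{weightS_nf} along the subsequence $n=q^k$.

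For a general $n$ with $q^k\le n<q^{k+1}$ I would write $S_nf=E_kf+R_nf$, where $R_nf$ collects only the frequencies $u(j)$ with $q^k\le j<n$; these all lie in the single martingale-difference level $\Delta_{k+1}$, so $R_nf$ is genuinely a one-scale object. Unfolding the $q$-ary digits $n=\sum_{j}b_jq^j$ and using the additivity $\chi_{y_1+y_2}=\chi_{y_1}\chi_{y_2}$ of the characters, the set $\{0,\dots,n-1\}$ decomposes lexicographically into pairwise disjoint blocks, one for each pair $(j,\beta)$ with $0\le\beta<b_j$, and the projection onto the block $(j,\beta)$ is the modulated conditional expectation $V_{j,\beta}f=\chi_{u(c_{j,\beta})}E_j\!\bigl(\overline{\chi_{u(c_{j,\beta})}}\,f\bigr)$, where $c_{j,\beta}=\sum_{i>j}n_iq^i+\beta q^j$. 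Thus
\[
S_nf=\sum_{j=0}^{k}\sum_{\beta=0}^{b_j-1}V_{j,\beta}f,\qquad |V_{j,\beta}f|=\bigl|E_j(\overline{\chi_{u(c_{j,\beta})}}f)\bigr|\le Mf,
\]
where the $V_{j,\beta}f$ are orthogonal (disjoint frequency blocks) and each occupies a single scale. The plan is then to pass to a weighted square function: by the weighted Littlewood--Paley (Burkholder--Gundy) equivalence valid for $A_p$ weights on this filtration, $\|S_nf\|_{L^p(\D,w)}$ is comparable to $\bigl\|(\sum_{j,\beta}|V_{j,\beta}f|^2)^{1/2}\bigr\|_{L^p(\D,w)}$, and I would dominate the latter by a weighted square function of $f$ that does not depend on $n$, hence comparable to $\|f\|_{L^p(\D,w)}$. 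Because the controlling square function is blind to $n$, this produces the uniform constant $C_{p,w}$ in \eqref{weightS_nf}.

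The main obstacle is precisely this last step. The naive route --- bounding $S_n=E_k+R_n$ and then estimating $R_n$ scale by scale through the digit recursion --- loses a factor proportional to the number of nonzero digits of $n$, i.e.\ $\sim\log_q n$, mirroring the logarithmic growth of the $L^1$ norm of the Dirichlet kernel; in particular no pointwise domination of $\sup_n|S_nf|$ by $Mf$ is available. On the circle this logarithm is defeated by realizing $S_N$ through modulations of a single Hilbert transform, but a local field carries no such conjugate operator --- this is exactly why the methods of \cite{HMW} and \cite{You} do not transfer --- so the necessary cancellation must instead be extracted from the orthogonality of the blocks $V_{j,\beta}$ via the weighted square function. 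Keeping the square-function estimate uniform in $n$ while the unimodular modulations $\chi_{u(c_{j,\beta})}$ vary from block to block is the delicate point, and is where the hypothesis \eqref{nec} is used in full strength, both through the maximal theorem and through the weighted Littlewood--Paley inequalities.

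The remaining two implications are softer. For $(b)\Rightarrow(c)$, statement (b) already supplies the inclusion $L^p(\D,w)\subseteq L^1(\D)$ and the uniform bound, so it suffices to observe that finite linear combinations of the characters $\chi_{u(n)}$ form a point-separating subalgebra of $C(\D)$, hence are uniformly dense by Stone--Weierstrass; since $w\in L^1(\D)$ (it is locally integrable and $\D$ is compact), uniform approximation upgrades to density in $L^p(\D,w)$. For a trigonometric polynomial $P$ one has $S_nP=P$ for all large $n$, and a standard three-term estimate combined with \eqref{weightS_nf} gives $S_nf\to f$ in $L^p(\D,w)$. For $(c)\Rightarrow(a)$, statement (c) gives $L^p(\D,w)\subseteq L^1(\D)$, so by the closed graph theorem this inclusion is bounded, equivalently $w^{-1/(p-1)}\in L^1(\D)$; hence each Fourier coefficient is a bounded functional on $L^p(\D,w)$ and each $S_n$ is a bounded operator on this Banach space. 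Convergence makes $\{S_nf\}_n$ bounded for every $f$, so Banach--Steinhaus yields $\sup_n\|S_n\|<\infty$, which is (b). Finally I would recover \eqref{nec} by testing the bound at $n=q^k$, where $S_{q^k}=E_k$: taking $f=w^{-1/(p-1)}\mathbf 1_Q$ for a coset $Q$ of $\P^k$ and computing $E_kf$ on $Q$ reproduces \eqref{nec} with $B=Q$, and since every ball is such a coset (using $\Lambda$-periodicity for the large balls), \eqref{nec} follows for all balls, closing the cycle.
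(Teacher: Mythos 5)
Your outer implications are essentially correct and essentially the paper's own: the embedding $L^p(\D,w)\hookrightarrow L^1(\D)$ via H\"older, the identity $S_{q^k}=E_k$ (from $D_{q^k}=q^k{\bf 1}_{\P^k}$) with $|E_kf|\le Mf$, the reduction of large balls to $\D$ by $\Lambda$-periodicity and the test functions supported on a coset for (b)$\Rightarrow$(a), density of trigonometric polynomials for (b)$\Rightarrow$(c), and Banach--Steinhaus for (c)$\Rightarrow$(b). Your block decomposition is also valid in any local field: by \eqref{eq.un} one has $u(c_{j,\beta}+s)=u(c_{j,\beta})+u(s)$ for $0\le s<q^j$, so each block is indeed the modulated conditional expectation $\chi_{u(c_{j,\beta})}E_j(\overline{\chi}_{u(c_{j,\beta})}f)$, and every block except the top one $E_kf$ has frequencies inside the single martingale level $\{u(m):q^k\le m<q^{k+1}\}$.

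The gap is precisely at the step you flag as delicate, and it is not a technicality: the ``weighted Littlewood--Paley (Burkholder--Gundy) equivalence'' you invoke does not apply to the blocks $V_{j,\beta}f$, because they are not martingale differences of any filtration in sight. As you yourself observe, all blocks other than $E_kf$ live inside the \emph{same} level $\Delta_k=E_{k+1}-E_k$; consequently the martingale square function of $S_nf$ is $\bigl(\sum_{i<k}|\Delta_if|^2+|R_nf|^2\bigr)^{1/2}$, which contains $|R_nf|$ itself as one entry and therefore gives no information about $R_nf=\sum_{j<k,\beta}V_{j,\beta}f$ beyond its norm --- exactly the quantity to be bounded. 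Martingale theory cannot separate orthogonal pieces sitting inside one level. What your two-sided sandwich actually requires is a weighted Rubio de Francia inequality for disjoint frequency intervals, and that is both unavailable and insufficient: even unweighted, the inequality $\|\sum_jg_j\|_p\lesssim\|(\sum_j|g_j|^2)^{1/2}\|_p$ for functions with disjoint interval frequency supports fails for $p>2$ (take the $g_j$ to be single characters, so that $\sum_jg_j$ is essentially a Dirichlet kernel with $\|\sum_jg_j\|_p\sim N^{1-1/p}\gg N^{1/2}$), while the known weighted versions require $A_{p/2}$-type hypotheses, strictly stronger than $A_p$, so they could not yield the stated characterization in any case. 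Your second inequality is equally unsupported: the only pointwise bound available, $|V_{j,\beta}f|\le Mf$, gives $\bigl(\sum_{j,\beta}|V_{j,\beta}f|^2\bigr)^{1/2}\lesssim(\log_qn)^{1/2}Mf$, which is exactly the logarithmic loss you set out to avoid, and no $n$-independent dominating square function is exhibited.

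The missing idea is to modulate \emph{globally before} decomposing, which is what the paper does: write $S_nf=\chi_n T_n(\overline{\chi}_nf)$, where $T_n$ is convolution with $K_n=\Phi_0\overline{\chi}_nD_n$. This single frequency shift by $-u(n)$ is what turns your blocks, all crammed into one level, into objects at genuinely separated scales; in the Vilenkin/Walsh case $T_n$ becomes literally a martingale transform, which is the correct way to exploit the filtration. In a general local field there is no such algebraic identity, and the paper instead proves (Proposition~\ref{p.kernelKn}, via an induction on the coset structure of $\widehat{K_n}$) that $K_n$ is a \emph{perfect} Calder\'on--Zygmund kernel: $|K_n(x)|\le q/|x|$ and $K_n(x+y)=K_n(x)$ whenever $|y|<|x|$. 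This yields the pointwise sharp-function bound $(T_nf)^\sharp\le C_sM_sf$ of Lemma~\ref{l.tnmr} with constant independent of $n$ --- the Hörmander error term vanishes identically, so no cancellation between scales is ever needed --- and then Theorem~\ref{T.sharp} together with the weighted maximal theorem closes \eqref{weightS_nf}. Without this modulation-first step, or a proof of a weighted Rubio de Francia inequality on local fields under the bare $A_p$ hypothesis, your argument for (a)$\Rightarrow$(b) does not go through.
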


For $1<p<\infty$, a weight $w$ on $K$ satisfying~\eqref{nec} is called an $A_p$ weight. The infimum of all such $C$ appearing in~\eqref{nec} is called the $A_p$ characteristic of $w$ and is denoted by $[w]_{A_p}$.

The corresponding classical result on the Euclidean spaces was proved by Hunt, Muckenhoupt and Wheeden in~\cite{HMW}. For the case (a) implies (b), the weighted weak type inequalities for the conjugate function $\tilde{f}$ for $1<p<2$ plays an important role in their proof. This linear operator is given by convolution with the distribution p.v.~$\frac{1}{\tan\frac{\phi}{2}}$. A crucial step in their proof is the decomposition of bounded mean oscillation functions: If $1\leq p<\infty$, $w$ satisfies $A_p$ condition and has period $2\pi$, then $\log w$ is of bounded mean oscillation. Moreover, there exist bounded functions $u$, $v$ with period $2\pi$ such that $\log w(x)=u(x)+\tilde{v}(x)$ for almost every $x$ (see page $240$ of~\cite{HMW}). Such a decomposition relies on the theory of Hardy spaces on the unit disc. It is not clear, a priori, if the required decomposition holds in the context of local fields. Another advantage in Euclidean spaces is the simple and closed form of the Dirichlet kernel $D_n(x)=\frac{1}{2\pi}\frac{\sin(n+\frac{1}{2})x}{\sin\frac{x}{2}}$, $n\in\N$, which turned out to be very useful in the deduction of (b) implies (a). 
 
In~\cite{You}, Young proved a similar result for the Vilenkin groups. A Vilenkin group $G$ is a direct product of cyclic groups of order $p_i$, where each $p_i$ is an integer greater than or equal to 2. In particular, if we take each $p_i$ to be equal to a prime $p$, then $G$ becomes the ring of integers of the $p$-series field $\mathbb{F}_p((X))$. On the other hand, the ring of integers of a $p$-adic field $\Q_p$ is not a Vilenkin group. Our result in this article is on the ring of integers $\D$ of a general local field which, of course, includes $\Q_p$ and $\mathbb{F}_p((X))$ as special cases. The Walsh-Paley or dyadic group $2^\omega$ can be identified with the additive group of the ring of integers of the field $\mathbb{F}_2((X))$. Hence, our result is also valid for Walsh-Paley group. It is also applicable to the $p$-adic fields $\Q_p$ which are not included in Young's result. 

Young's proof consists of adapting the methods of~\cite{HMW} and~\cite{CF} to the Vilenkin-Fourier series. Similar to the circle group, the conjugate function again plays an important role in this case also. Moreover, the characters of a Vilenkin group can be written as a finite product of exponential functions and the Dirichlet kernels can be expressed in terms of sine functions as in the case of the trigonometric Fourier series. This fact was exploited in~\cite{You}. In a general local field, there are simple expressions for the Dirichlet kernel $D_n$ only for $n=q^k$, $k\geq 0$, but computing $D_n$ for general values of $n$ is not easy and there is no simple expression unlike the case of trigonometric and Vilenkin Fourier series. These are some of the obstacles which makes our problems somewhat different from that of the circle and Vilenkin groups. Nevertheless, some aspects of the proofs of the implications (b) implies (a) and (c) implies (b) can be adapted from those of~\cite{HMW}. In both instances we omit the details to avoid repetition. The main difficulty lies in the proof of (a) implies (b). Our approach relies on some techniques from the theory of regular singular integral operators (see, e.g.,~Chapter IV of~\cite{GR}). Hence, this also gives another proof for the case of Vilenkin groups which are products of cyclic groups of the same order $p$.

We provide an application of our main result to the theory of Schauder bases on shift-invariant spaces. Before stating this result, we consider a more general problem. Let $K$ be a local field, $\Gamma$ be a countable set in $K$, and $\varphi\in L^2(K)$. Define
\[
V(\varphi,\Gamma)=\overline{\rm span}\{\varphi(\cdot-\gamma): \gamma\in\Gamma\}.
\]
We consider the following problem: When does the system of translates  $\{\varphi(\cdot-\gamma):\gamma\in\Gamma\}$ form an orthonormal basis/Scahuder basis for the space $V(\varphi,\Gamma)$?

If $K=\Q_p$, the field of $p$-adic numbers, then we can answer this question using Fuglede's conjecture on $\Q_p$.

\begin{theorem}\label{fuglede}
A Borel set $\Omega$ of positive and finite Haar measure in $\Q_p$ is a spectral set if and only if it tiles $\Q_p$ by translations.
\end{theorem}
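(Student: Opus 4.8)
The plan is to follow the strategy of Fan, Fan, Liao and Shi~\cite{FFLS,FFS}, reducing the assertion for an arbitrary Borel set of positive and finite measure to a purely combinatorial statement on a finite cyclic $p$-group, where Fuglede's equivalence is already known. Both the spectral property and the tiling property are invariant under translations and under the dilations $x\mapsto p^k x$ ($k\in\Z$) of $\Q_p$, so one first normalizes $\Omega$: since $\Omega$ has finite measure it is contained (mod $0$) in a ball, and an affine change carries that ball to $\Z_p$, so we may assume $\Omega\subseteq\Z_p$.

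The heart of the argument is a rigidity step: I would show that if $\Omega$ is spectral, or if $\Omega$ tiles $\Q_p$, then $\Omega$ coincides, modulo a set of Haar measure zero, with a \emph{compact open} set, i.e.\ with a finite disjoint union of balls $c_j+p^n\Z_p$. For the spectral case this exploits the fact that every additive character of $\Q_p$ is locally constant, being constant on balls of radius $p^{-n}$ once its frequency is bounded; combined with the ultrametric structure, an orthonormal basis of characters for $L^2(\Omega)$ forces $1_\Omega$ to be locally constant a.e. For the tiling case, the analogous rigidity is read off from the structure of the tiling complement. This is where the special geometry of $\Q_p$ (total disconnectedness, the ultrametric inequality, and local constancy of characters) is indispensable, and I expect it to be the main obstacle, since it has no counterpart over $\R$.

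Once $\Omega$ is a finite union of balls, refining to a common radius writes $\Omega=\bigsqcup_{c\in A}\bigl(c+p^n\Z_p\bigr)$ with $A$ a finite subset of the quotient $\Z_p/p^n\Z_p\cong\Z/p^n\Z$, a finite cyclic $p$-group $G$. The next step is to verify that $\Omega$ is spectral in $\Q_p$ if and only if $A$ is spectral in $G$, and that $\Omega$ tiles $\Q_p$ if and only if $A$ tiles $G$; both equivalences follow by restricting the characters of $\Q_p$ to $G$ and matching the measure- and counting-theoretic normalizations.

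Finally, I would invoke the validity of Fuglede's conjecture on the finite cyclic group $\Z/p^n\Z$ of prime-power order, where spectral subsets and tiling subsets are known to coincide (via the theory of vanishing sums of roots of unity and the cyclotomic Coven--Meyerowitz conditions, which are equivalent in the prime-power case). Combining this with the two reductions above yields the desired equivalence for $\Omega$ and completes the proof.
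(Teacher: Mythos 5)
First, a point of comparison: the paper does not prove Theorem~\ref{fuglede} at all. It is imported as a known result of Fan, Fan, Liao and Shi~\cite{FFLS} and used as a black box in the proof of Theorem~\ref{onb}. So your proposal can only be measured against the argument of~\cite{FFLS} (and its companion~\cite{FFS}), whose overall architecture you have in fact correctly identified: prove a rigidity statement reducing to compact open sets, pass to subsets of $\Z/p^n\Z$, and invoke the known Fuglede equivalence on cyclic $p$-groups.

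Measured against that, your proposal has two genuine gaps. The first is the normalization: it is false that a Borel set of finite Haar measure in $\Q_p$ is contained, modulo a null set, in a ball. For instance, $\Omega=\bigcup_{n\geq 1}\bigl(x_n+p^n\Z_p\bigr)$ with $|x_n|\rightarrow\infty$ has measure $\sum_n p^{-n}<\infty$ but is essentially unbounded, where $\Z_p$ denotes the ring of integers of $\Q_p$. Essential boundedness of spectral sets and of tiles is part of the \emph{conclusion} of the rigidity step (almost compact open implies, in particular, almost bounded), so it cannot be assumed at the outset; the reduction to $\Omega\subseteq\Z_p$ must come after the rigidity, not before. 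The second and more serious gap is that the rigidity step itself --- the heart of the whole theorem, as you acknowledge --- is only asserted, and the justification you sketch does not work: local constancy of each individual character places no constraint on ${\bf 1}_\Omega$ (every measurable set of finite measure is orthogonal-tested against such characters); the spectral hypothesis is a completeness-plus-orthogonality statement about an infinite family, and extracting from it that ${\bf 1}_\Omega$ is a.e.\ locally constant is precisely what occupies most of~\cite{FFLS}. Their proof runs through the functional equation $\sum_{\lambda\in\Lambda}|\widehat{{\bf 1}_\Omega}(\xi-\lambda)|^2=|\Omega|^2$ a.e., a careful study of the zero set of $\widehat{{\bf 1}_\Omega}$, and a separate structural argument for tiles; none of this is replaced by the ultrametric remarks you offer. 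Without supplying it, what you have is a correct plan --- essentially the plan of~\cite{FFLS} --- rather than a proof. (The final two steps, the transfer between compact open sets in $\Q_p$ and subsets of $\Z/p^n\Z$ and the validity of Fuglede's conjecture on $\Z/p^n\Z$, are sound, though the descent of a tiling of $\Q_p$ to a tiling of the finite quotient also deserves an explicit argument.)
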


We say that $\Omega$ tiles $\Q_p$ by translations if there exists a set $T\subset\Q_p$ such that $\sum_{t\in T}{\bf 1}_\Omega(x-t)=1$ for a.e.\,$x\in\Q_p$, where ${\bf 1}_\Omega$ is the characteristic function of $\Omega$. The set $\Omega$ is said to be a spectral set if there exists a set $S\subset\Q_p$ such that $\{\chi_s:s\in S\}$ is an orthonormal basis for $L^2(\Omega)$. Theorem~\ref{fuglede} was recently proved by Fan, Fan, Liao, and Shi~\cite{FFLS}. Using this theorem we can prove the following result. 

\begin{theorem}\label{onb}
Let $\Omega $ be a Borel set of positive and finite Haar measure in $\Q_p$. Let $\varphi \in L^2(\Q_p)$ with ${\rm supp}\,\hat\varphi\subseteq\Omega$ and $|\hat{\varphi}|=1$ on $\Omega$ a.e. If $\Omega$ tiles $\Q_p$ by translations, then there exists $\Gamma\subset\Q_p$ such that $\{\varphi(\cdot-\gamma):\gamma\in\Gamma\}$ forms an orthonormal basis for $V(\varphi,\Gamma)$. The converse is also true.
\end{theorem}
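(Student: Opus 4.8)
The plan is to transport the entire question to the frequency side via Plancherel and reduce it to the notion of a spectral set, after which Theorem~\ref{fuglede} closes both implications. Writing $\mathcal{F}$ for the Fourier transform on $L^2(\Q_p)$, a direct computation gives $\mathcal{F}\bigl(\varphi(\cdot-\gamma)\bigr)(\xi)=\overline{\chi_\gamma(\xi)}\,\hat\varphi(\xi)=\chi_{-\gamma}(\xi)\,\hat\varphi(\xi)$, so every translate has Fourier transform supported in $\Omega$. Since $|\hat\varphi|=1$ a.e.\ on $\Omega$ and $\hat\varphi=0$ off $\Omega$, the map $J\colon L^2(\Omega)\to L^2(\Q_p)$, $Jg=\hat\varphi\,g$ (with $g$ extended by zero), is an isometry onto $\{h\in L^2(\Q_p):\operatorname{supp}h\subseteq\Omega\}$, with inverse $h\mapsto\overline{\hat\varphi}\,h|_\Omega$. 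Hence $U:=J^{-1}\mathcal{F}$ is a unitary operator from the principal shift-invariant space $\{f\in L^2(\Q_p):\operatorname{supp}\hat f\subseteq\Omega\}$ onto $L^2(\Omega)$ that sends each $\varphi(\cdot-\gamma)$ to $\chi_{-\gamma}|_\Omega$. In particular, by Plancherel,
\[
\langle\varphi(\cdot-\gamma),\varphi(\cdot-\gamma')\rangle=\int_\Omega\chi_{\gamma'-\gamma}(\xi)\,d\xi,
\]
so $\{\varphi(\cdot-\gamma):\gamma\in\Gamma\}$ is orthonormal precisely when $\{\chi_s|_\Omega:s\in-\Gamma\}$ is orthonormal in $L^2(\Omega)$.

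With this dictionary in place, the first implication is immediate. If $\Omega$ tiles $\Q_p$, then by Theorem~\ref{fuglede} it is a spectral set, so there is $S\subseteq\Q_p$ for which $\{\chi_s|_\Omega:s\in S\}$ is an orthonormal basis of $L^2(\Omega)$. Setting $\Gamma=-S$ and applying $U^{-1}$, the translates $\{\varphi(\cdot-\gamma):\gamma\in\Gamma\}$ form an orthonormal basis of $U^{-1}\bigl(L^2(\Omega)\bigr)$; because this system is complete, its closed linear span $V(\varphi,\Gamma)$ is exactly that full shift-invariant space, and the translates are an orthonormal basis for it.

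For the converse I would run the same equivalence backwards. If $\{\varphi(\cdot-\gamma):\gamma\in\Gamma\}$ is an orthonormal basis of $V(\varphi,\Gamma)$, then $U$ carries it to $\{\chi_s|_\Omega:s\in-\Gamma\}$, which is therefore an orthonormal basis of $L^2(\Omega)$; thus $-\Gamma$ is a spectrum for $\Omega$, so $\Omega$ is spectral, and Theorem~\ref{fuglede} yields that $\Omega$ tiles $\Q_p$ by translations.

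The substantive point, and the step I expect to require the most care, is \emph{completeness}: one must ensure the orthonormal system of translates spans the \emph{entire} space of $L^2$-functions with Fourier support in $\Omega$, rather than a proper subspace, since mere orthonormality of a sparse family of translates never forces tiling. This is exactly what the spectrum condition encodes under $U$, and it is the reason the equivalence is with being a \emph{basis} and not merely an orthonormal set. A secondary bookkeeping issue is normalization: since $\|\varphi(\cdot-\gamma)\|_2^2=|\Omega|$, literal orthonormality of the translates forces $|\Omega|=1$, so in the general case one reads ``orthonormal'' up to the harmless scaling by $|\Omega|^{-1/2}$, which does not affect the tiling/spectral equivalence.
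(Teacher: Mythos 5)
Your overall route is the same as the paper's: invoke Theorem~\ref{fuglede} to pass between tiling and spectrality, and transfer an orthonormal basis of characters on $\Omega$ to the translates of $\varphi$ via Plancherel and the unimodularity of $\hat\varphi$. The only differences are cosmetic: you package the transfer as an explicit unitary $U=J^{-1}\mathcal{F}$ and take $\Gamma=-S$, whereas the paper expands $\hat f/\hat\varphi$ in the conjugate characters $\overline{\chi}_\gamma$ and keeps $\Gamma$ equal to the spectrum itself. Your forward implication is complete and correct, and your normalization remark is a fair catch (literal orthonormality of $\{\chi_s|_\Omega\}$ forces $|\Omega|=1$, a point the paper's definition of spectral set also glosses over).

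The converse, however, contains a genuine gap, and it is precisely the issue you flag in your closing paragraph without resolving. The step ``$U$ carries it to $\{\chi_s|_\Omega:s\in-\Gamma\}$, which is \emph{therefore} an orthonormal basis of $L^2(\Omega)$'' is unjustified: $U$ restricted to $V(\varphi,\Gamma)$ is a unitary onto $U\bigl(V(\varphi,\Gamma)\bigr)=\overline{\rm span}\{\chi_s|_\Omega:s\in-\Gamma\}$, which is a closed subspace of $L^2(\Omega)$ that the hypothesis does not force to be all of $L^2(\Omega)$. Since $V(\varphi,\Gamma)$ is \emph{defined} as the closed span of the translates, ``orthonormal basis for $V(\varphi,\Gamma)$'' is synonymous with ``orthonormal system,'' so your hypothesis only yields an orthonormal system of characters in $L^2(\Omega)$ --- and, as you yourself observe, mere orthonormality of translates can never force tiling. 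Concretely, take $\Gamma=\{0\}$ and any non-tiling Borel set $\Omega$ with $|\Omega|=1$ (such sets exist, e.g.\ suitable finite unions of disjoint balls): then $\|\varphi\|_2^2=|\Omega|=1$, so $\{\varphi\}$ is an orthonormal basis of $V(\varphi,\Gamma)=\C\varphi$, yet $\Omega$ does not tile. Thus the completeness you need is not ``encoded by the spectrum condition under $U$''; it is an additional hypothesis, namely that $V(\varphi,\Gamma)$ equals the full space $\{f\in L^2(\Q_p):{\rm supp}\,\hat f\subseteq\Omega\}$, and under that reading your argument does close. In fairness, the paper's own one-line proof of the converse (``by a similar argument as above'') has exactly the same defect, so your proposal matches the paper's proof both in content and in weakness; but a complete proof of the stated converse must either assume this completeness explicitly or explain why it holds, and neither your write-up nor the paper's does.
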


Fuglede's conjecture is still open for general local fields, in particular for local fields of positive characteristic. Nevertheless, for the set $\Gamma=\{u(k):k\in\N_0\}$, we characterize the functions $\varphi$ in terms of $A_2$ weights. This extends the result from the real line obtained in~\cite{NS}.

\begin{theorem}\label{sc}
Let $K$ be a local field of positive characteristic, $\Gamma=\{u(k):k\in\N_0\}$, and $\varphi\in L^2(K)$. Then, the family $\{\varphi(\cdot-u(k)): k\in\N_0\}$ is a Schauder basis for  $V(\varphi, \Gamma)$ if and only if $w_\varphi\in A_2(\D)$, where $w_{\varphi}(\xi)=\sum_{n=0}^\infty|\hat{\varphi}(\xi+u(n))|^2$.
\end{theorem}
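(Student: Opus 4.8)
The plan is to reduce the Schauder basis property of the translation system to the uniform boundedness of the Fourier partial sum operators $S_n$ on a weighted space, and then invoke Theorem~\ref{main} to convert this into the $A_2$ condition on $w_\varphi$. First I would set up the fiberization/periodization machinery: for $\varphi\in L^2(K)$ the function $w_\varphi(\xi)=\sum_{n\geq 0}|\hat\varphi(\xi+u(n))|^2$ is the $\Lambda$-periodic ``bracket function'' and is integrable over $\D$, so $w_\varphi\in L^1(\D)$ is automatically a candidate weight (positivity a.e.\ on its support will require care). The standard shift-invariant space theory, transported to the local field via the Fourier transform, identifies $V(\varphi,\Gamma)$ isometrically with a weighted space: the Plancherel/Parseval relation turns $\sum_k c_k\varphi(\cdot-u(k))$ on the spatial side into $\hat\varphi(\xi)\sum_k c_k\overline{\chi_{u(k)}(\xi)}$ on the frequency side, and integrating against the periodization collapses $\|\sum_k c_k\varphi(\cdot-u(k))\|_{L^2(K)}^2$ into $\int_\D |\sum_k c_k\overline{\chi_{u(k)}(\xi)}|^2 w_\varphi(\xi)\,d\xi$. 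This is exactly a weighted $L^2(\D,w_\varphi)$ norm of a trigonometric polynomial in the characters $\chi_{u(k)}$.

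Next I would make the Schauder basis condition precise on the frequency side. Saying that $\{\varphi(\cdot-u(k))\}$ is a Schauder basis for its closed span is, after the above isometry, the statement that the monomials $\{\chi_{u(k)}:k\in\N_0\}$ form a Schauder basis for $L^2(\D,w_\varphi)$ (restricted to the closure of their span). The defining feature of a Schauder basis is that the associated coordinate/partial-sum projections are uniformly bounded; here the partial-sum projection onto $\mathrm{span}\{\chi_{u(0)},\dots,\chi_{u(n-1)}\}$ in $L^2(\D,w_\varphi)$ is precisely the operator $S_n$ of Theorem~\ref{main} with $p=2$ and weight $w=w_\varphi$. So the plan is: $\{\varphi(\cdot-u(k))\}$ is a Schauder basis $\iff$ the $S_n$ are uniformly bounded on $L^2(\D,w_\varphi)$ together with completeness/convergence, which by the equivalence of (b) and (c) in Theorem~\ref{main} is $\iff$ $w_\varphi$ satisfies the $A_2(\D)$ condition. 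One direction (Schauder basis $\Rightarrow$ uniform boundedness of projections $\Rightarrow$ $A_2$) uses the uniform boundedness principle and the implication (b)$\Rightarrow$(a); the other direction ($A_2\Rightarrow$ (b) and (c)) supplies uniform boundedness and $L^2$-convergence of $S_nf\to f$, which combined with minimality (linear independence) of the translates yields the Schauder basis property.

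I expect the main obstacle to be the two technical points surrounding the weight. First, Theorem~\ref{main} requires $w$ to be a genuine weight, i.e.\ locally integrable and \emph{positive a.e.}, whereas $w_\varphi$ may vanish on a set of positive measure; one must therefore work on the support $E=\{\xi:w_\varphi(\xi)>0\}$ and argue that the relevant Hilbert space $V(\varphi,\Gamma)$ on the frequency side is naturally $L^2(E,w_\varphi)$, checking that the $A_2$ condition on $\D$ forces (or is compatible with) $w_\varphi>0$ a.e.\ on $\D$ when the translates span. This interplay between the vanishing set of $w_\varphi$ and the completeness of the translates is the delicate part: a Schauder basis must be complete, and an $A_2$ weight is positive a.e., so both conditions should conspire to force $w_\varphi>0$ a.e., but this equivalence needs to be justified carefully rather than assumed. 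Second, I would verify that the abstract Schauder-basis $\Leftrightarrow$ uniform-boundedness-of-partial-sums equivalence applies in this weighted setting, namely that $\{\chi_{u(k)}\}$ is already a complete minimal system in $L^2(\D,w_\varphi)$ so that uniform boundedness of the partial sums is both necessary and sufficient. The positive-characteristic hypothesis presumably enters to guarantee the clean group structure of $\Lambda$ (so that $\Gamma=\{u(k)\}$ behaves like the dual lattice and the characters $\chi_{u(k)}$ are exactly the needed orthonormal family), and I would flag this as the reason the theorem is stated only for $K$ of positive characteristic.
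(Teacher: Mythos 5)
Your proposal follows essentially the same route as the paper: transfer the problem via the isometry $J_\varphi$ to $L^2(\D,w_\varphi)$, identify the Schauder-basis partial sum projections with the Fourier partial sum operators $S_n$, and invoke Theorem~\ref{main} with $p=2$ to get the equivalence with $w_\varphi\in A_2(\D)$. The two technical points you flag are resolved in the paper exactly as you anticipate, through the canonical dual: a Schauder basis forces the existence of the biorthogonal system, which by the paper's Proposition on canonical duals is equivalent to $\tfrac{1}{w_\varphi}\in L^1(\D)$ (hence $w_\varphi>0$ a.e., so no restriction to the support of $w_\varphi$ is needed), and the explicit dual $z_k=\chi_k/w_\varphi$ gives $\langle f,z_k\rangle_{L^2(\D,w_\varphi)}=\langle f,\chi_k\rangle_{L^2(\D)}$, which is precisely the identification of the abstract partial sums with $S_n$ that you asserted.
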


The rest of the article is organized as follows. In Section~\ref{s.prelim}, we fix terminologies and recall some basic facts about local fields. In Section~\ref{s.HLM}, we discuss about the boundedness of the Hardy-Littlewood maximal operator on weighted spaces on local fields. We also talk about the best possible power dependence of the operator norm on the $A_p$ characteristic. We prove our main result Theorem~\ref{main} in Section~\ref{s.proof}. In Section~\ref{s.appl}, using this result, we prove Theorem~\ref{sc} and conclude our article with a proof of Theorem~\ref{onb}.

\section{Preliminaries}\label{s.prelim}

Let $K$ be a field and a topological space. Then $K$ is called a locally compact field or a \emph{local field} if both $K^+$ and $K^*$ are locally compact abelian groups, where $K^+$ and $K^*$ denote the additive and multiplicative groups of $K$ respectively.

If $K$ is any field and is endowed with the discrete topology, then $K$ is a local field. Further, if $K$ is connected, then $K$ is either $\R$ or $\C$. If $K$ is not connected, then it is totally disconnected. So, by a local field, we mean a field $K$ which is locally compact, non-discrete and totally disconnected. We use the notation of the book by Taibleson~\cite{Taib}. Proofs of all the results stated in this section can be found in the books~\cite{Taib} and~\cite{RV}.

Let $K$ be a local field. Since $K^+$ is a locally compact abelian group, we choose a Haar measure $dx$ for $K^+$. If $\alpha\in K$ and $\alpha\neq 0$, then $d(\alpha x)$ is also a Haar measure. By uniqueness of Haar measure, $d(\alpha x)=c\,dx$ for some $c>0$. Let $c=|\alpha|$. We call $|\alpha|$ the \emph{absolute value} or the  \emph{valuation} of $\alpha$. We also let $|0| = 0$.

The map $x\rightarrow |x|$ has the following properties:
\begin{itemize}
\item[(a)] $|x| = 0$ if and only if $x=0$;
\item[(b)] $|xy|=|x||y|$ for all $x,y\in K$;
\item[(c)] $|x+y|\leq\max\{|x|, |y|\}$ for all $x,y\in K$.
\end{itemize}
Property (c) is called the \emph{ultrametric inequality}. It follows that 
\begin{equation}\label{e.max}
|x+y|=\max\{|x|, |y|\}\quad\mbox{if}~|x|\ne |y|.
\end{equation}
The set $\D=\{x\in K:|x|\leq 1\}$ is called the \emph{ring of integers} in $K$. It is the unique maximal compact subring of $K$. Define $\P=\{x\in K:|x|<1\}$. The set $\P$ is called the \emph{prime ideal} in $K$. The prime ideal in $K$ is the unique maximal ideal in $\D$. It is principal and prime. 

Since $K$ is totally disconnected, the set of values $|x|$, as $x$ varies over $K$, is a discrete set of the form $\{s^k: k\in\Z\}\cup \{0\}$ for some $s>0$. Hence, there is an element of $\P$ of maximal absolute value. Let $\p$ be a fixed element of maximum absolute value in $\P$. Such an element is called a \emph{prime element} of $K$. It can be proved that $\D$ is compact and open. Hence, $\P$ is compact and open. Therefore, the residue space $\D/\P$ is isomorphic to a finite field $\mathbb{F}_q$, where $q=p^c$ for some prime $p$ and $c\in\N$. For a proof of this fact, we refer to~\cite{Taib}.

For a measurable subset $E$ of $K$, let $|E|=\int_K{\mathbf 1}_E(x)\,dx$, where ${\mathbf 1}_E$ is the characteristic function of $E$ and $dx$ is the Haar measure of $K$ normalized so that $|\D|=1$. Then, it is easy to see that $|\P|=q^{-1}$ and $|\p|=q^{-1}$ (see~\cite{Taib}). It follows that if $x\neq 0$, and $x\in K$, then $|x|=q^k$ for some $k\in\mathbb{Z}$.

Let $\D^*=\D\setminus\P=\{x\in K: |x|=1\}$. It is the group of units in $K^*$. If $x\neq 0$, we can write $x=\p^k x'$, with $x'\in\D^*$. Let $\P^k=\p^k\D=\{x\in K:|x|\leq q^{-k}\}$, $k\in\Z$. These are called \emph{fractional ideals}. Each $\P^k$ is compact and open and is a subgroup of $K^+$ (see~\cite{RV}). It follows that $|\P^k|=q^{-k}$ for $k\in\Z$.

A set $B$ of the form $h+\P^k$ will be called a \emph{ball with centre} $h$ \emph{and radius} $q^{-k}$. 
It is easy to verify the following facts. For a proof, we refer to~\cite{Taib}.

\begin{proposition}\label{p.balls}
In a local field $K$
\begin{enumerate}
\item[(a)] every point of a ball is its centre,
\item[(b)] if two balls intersect, then one contains the other.
\end{enumerate}
\end{proposition}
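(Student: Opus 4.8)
The plan is to read off both assertions directly from the ultrametric inequality (c) together with the description of a ball $B=h+\P^k$ as the sublevel set $\{x\in K:|x-h|\leq q^{-k}\}$ (which follows from $\P^k=\{x:|x|\leq q^{-k}\}$). The whole proof is an exercise in exploiting the fact that, under the ultrametric inequality, the ordinary triangle inequality is replaced by the much stronger estimate $|x+y|\leq\max\{|x|,|y|\}$; everything below is a consequence of using this strengthened bound in place of the usual one.

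For part (a), I would fix a ball $B=h+\P^k$ and an arbitrary point $g\in B$, so that $|g-h|\leq q^{-k}$, and then show $B=g+\P^k$, i.e.\ that $g$ serves equally well as a centre. To prove $B\subseteq g+\P^k$, I take $x\in B$, write $x-g=(x-h)-(g-h)$, and apply (c) to get $|x-g|\leq\max\{|x-h|,|g-h|\}\leq q^{-k}$, so $x\in g+\P^k$. The reverse inclusion $g+\P^k\subseteq B$ is obtained by the identical computation with the roles of $g$ and $h$ interchanged. Hence $B=g+\P^k$, and since $g\in B$ was arbitrary, every point of $B$ is a centre.

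For part (b), I would take two intersecting balls $B_1=h_1+\P^{k_1}$ and $B_2=h_2+\P^{k_2}$ and assume, without loss of generality, that $q^{-k_1}\leq q^{-k_2}$, i.e.\ $k_1\geq k_2$; since $|x|\leq q^{-k_1}$ forces $|x|\leq q^{-k_2}$, the fractional ideals are nested as $\P^{k_1}\subseteq\P^{k_2}$. Choosing any $z\in B_1\cap B_2$ and applying part (a) to re-centre each ball at $z$, I get $B_1=z+\P^{k_1}$ and $B_2=z+\P^{k_2}$. The nesting of the ideals then gives at once $B_1=z+\P^{k_1}\subseteq z+\P^{k_2}=B_2$, so the ball of smaller radius is contained in the one of larger radius.

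There is no genuine obstacle here; the only thing requiring attention is to invoke the strong form \eqref{e.max}-type inequality (c) rather than the ordinary triangle inequality, and to notice that the re-centring from part (a) is precisely what collapses part (b) into the trivial total ordering $\P^{k_1}\subseteq\P^{k_2}$ of the fractional ideals. This total ordering of the ideals $\{\P^k\}_{k\in\Z}$ by inclusion is the structural feature of the totally disconnected setting that makes the statement true, and it is exactly the property that would fail in $\R$ or $\C$.
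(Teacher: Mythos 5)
Your proof is correct and complete: the re-centring argument via the ultrametric inequality for part (a), followed by re-centring both balls at a common point to reduce part (b) to the nesting $\P^{k_1}\subseteq\P^{k_2}$, is exactly the standard argument. The paper itself does not spell out a proof (it defers to Taibleson's book), and the proof given there is essentially the one you wrote, so there is nothing to add.
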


For $k\in\Z$, let $\Phi_k={\bf 1}_{\P^k}$, the characteristic function of $\P^k$\index{$\Phi_k$}. Note that the characteristic function of the ball $h+\P^k$ is $\tau_h\Phi_k=\Phi_k(\cdot-h)$. It follows from Proposition~\ref{p.balls}\,(b) that $\tau_h\Phi_k$ is constant on cosets of $\P^k$.

The space of all finite linear combinations of functions of the form $\tau_h\Phi_k$, $h\in K$, $k\in\Z$, will be denoted by $\mathcal{S}$. The following result provides information about the structure of the set $\mathcal{S}$.

\begin{proposition}\label{supp}
If $g\in\mathcal{S}$ is constant on cosets of $\P^k$ and is supported on $\P^l$, then $\hat g\in\mathcal{S}$ is constant on cosets of $\P^{-l}$ and is supported on $\P^{-k}$.
\end{proposition}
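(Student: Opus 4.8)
The plan is to reduce everything to the single computation of $\hat\Phi_k$ and then propagate the support and periodicity through the Fourier transform. First note that the two hypotheses are compatible only when $l\le k$: if $l>k$, then $\P^l\subsetneq\P^k$ forces the constant value of $g$ on the coset $\P^k$ to vanish, so $g\equiv 0$ and the claim is trivial. Assuming $l\le k$, since $g$ is constant on cosets of $\P^k$ and supported on $\P^l$, and since $\P^l$ is the disjoint union of the $q^{k-l}$ cosets $h_j+\P^k$ of $\P^k$ contained in it, I would write $g=\sum_j c_j\,\tau_{h_j}\Phi_k$ for finitely many $h_j\in\P^l$ and scalars $c_j$, where $\tau_{h_j}\Phi_k={\bf 1}_{h_j+\P^k}$. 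The whole statement then follows once $\hat\Phi_k$ is known and the translation rule is applied.

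The key step is the identity $\hat\Phi_k=q^{-k}\Phi_{-k}$. To prove it I would evaluate $\hat\Phi_k(\xi)=\int_{\P^k}\overline{\chi(\xi x)}\,dx$ by splitting on the size of $\xi$. If $\xi\in\P^{-k}$, i.e.\ $|\xi|\le q^k$, then $|\xi x|\le 1$ for every $x\in\P^k$, so $\xi x\in\D$ and $\chi(\xi x)=1$ (as $\chi$ is trivial on $\D$); hence the integral equals $|\P^k|=q^{-k}$. If $|\xi|>q^k$, then $x\mapsto\chi(\xi x)$ is a nontrivial character of the compact group $\P^k$: using the normalization that $\chi$ is nontrivial on $\P^{-1}$, there is an $a$ with $|a|=q$ and $\chi(a)\ne 1$, and then $x_0=a/\xi$ lies in $\P^k$ with $\chi(\xi x_0)=\chi(a)\ne 1$, so orthogonality of characters gives $\hat\Phi_k(\xi)=0$. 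This character-cancellation argument is the only place where the special normalization of $\chi$ enters, and I expect it to be the main point of the proof; everything else is bookkeeping with the ultrametric inequality.

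With this in hand, the translation rule $\widehat{\tau_h g}(\xi)=\overline{\chi(h\xi)}\,\hat g(\xi)$ gives
\[
\hat g(\xi)=q^{-k}\,\Phi_{-k}(\xi)\sum_j c_j\,\overline{\chi(h_j\xi)}.
\]
The factor $\Phi_{-k}={\bf 1}_{\P^{-k}}$ shows at once that $\hat g$ is supported on $\P^{-k}$. For the periodicity I would check that the exponential sum is constant on cosets of $\P^{-l}$: for $\eta\in\P^{-l}$ and $h_j\in\P^l$ one has $|h_j\eta|\le q^{-l}\cdot q^{l}=1$, so $h_j\eta\in\D$ and $\chi(h_j\eta)=1$, whence $\overline{\chi(h_j(\xi+\eta))}=\overline{\chi(h_j\xi)}$. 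Since $\P^{-l}\subseteq\P^{-k}$, the indicator $\Phi_{-k}$ is also constant on cosets of $\P^{-l}$, so the product $\hat g$ is constant on cosets of $\P^{-l}$. Finally, being supported on the compact set $\P^{-k}$ and constant on the $q^{k-l}$ cosets of $\P^{-l}$ inside it, $\hat g$ is a finite linear combination of indicators $\tau_{\eta_i}\Phi_{-l}$, hence $\hat g\in\mathcal{S}$, which completes the proof.
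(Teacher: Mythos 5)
Your proof is correct, but it takes a genuinely different route from the paper's. The paper argues directly with the integral defining $\hat g$, never decomposing $g$: it gets the periodicity of $\hat g$ from the support hypothesis (for $h\in\P^{-l}$ and $x\in{\rm supp}\,g\subseteq\P^l$ one has $\chi(hx)=1$, so $\hat g(u+h)=\hat g(u)$), and the support of $\hat g$ from the periodicity hypothesis via a change of variables, which yields $\hat g(u)=\chi(uh)\hat g(u)$ for all $h\in\P^k$ and then $\hat g(u)=0$ for $|u|>q^k$ by choosing $h$ with $\chi(uh)\neq 1$. You instead reduce everything to the single explicit computation $\hat\Phi_k=q^{-k}\Phi_{-k}$, decompose $g=\sum_j c_j\tau_{h_j}\Phi_k$ (which forces you to dispose of the degenerate case $l>k$, where $g\equiv 0$ --- a point you handle correctly), and read both conclusions off the resulting formula $\hat g=q^{-k}\Phi_{-k}\sum_j c_j\overline{\chi(h_j\cdot)}$ via the translation rule. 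The underlying facts are the same in both arguments ($\chi$ trivial on $\D$ but not on $\P^{-1}$, and cancellation for nontrivial characters of a compact group), but the paper's manipulation is shorter, needs no case analysis, and uses nothing about $g$ beyond integrability and the two stated properties; your version buys an explicit closed form for $\hat g$ and, unlike the paper, explicitly verifies the assertion $\hat g\in\mathcal{S}$ (the paper leaves that membership implicit in the conclusion that $\hat g$ is supported on $\P^{-k}$ and constant on cosets of $\P^{-l}$).
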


\begin{proof}
Suppose $g$ is constant on cosets of $\P^k$ and is supported on $\P^l$. Take $h\in\P^{-l}$ so that $|h|\leq q^l$. Now,
\[
\hat g(u+h)=\int_K g(x)\overline{\chi_{u+h}(x)}\,dx
=\int_{|x|\leq q^{-l}} g(x)\overline{\chi(ux)}\overline{\chi(hx)}\,dx
=\hat g(u),
\]
since $|xh|\leq 1$ and $\chi$ is trivial in $\D$. Hence, $\hat g$ is constant on cosets of $\P^{-l}$. Also, since $g$ is constant on cosets of $\P^k$, we have
\[
\hat g(u)=\int_K g(x)\overline{\chi(ux)}\,dx=\int_K g(x+h)\overline{\chi(ux)}\,dx
\]
for all $h\in\P^k$. By changing variables, we get
\[
\hat g(u)=\int_K g(x)\overline{\chi(ux-uh)}\,dx=\chi(uh)\int_K g(x)\overline{\chi(ux)}\,dx=\chi(uh)\hat g(u).
\]
If $u\not\in{\P}^{-k}$, then $|u|>q^k$. Hence, there exists $h\in\P^k$ such that $|uh|>1$ and $\chi(uh)\not=1$. Therefore, $\hat g(u)=0$.
\end{proof}

Let $\chi_u$ be any character of $K^+$. Since $\mathfrak{D}$ is a subgroup of $K^+$, the restriction $\chi_{u}|_\mathfrak{D}$ is a character of $\mathfrak{D}$. Also, as characters of $\mathfrak{D}, \chi_u = \chi_v$ if and only if $u-v\in \mathfrak{D}$. That is, $\chi_u=\chi_v$ if $u+\mathfrak{D}=v+\mathfrak{D}$ and $\chi_u\neq \chi_v$ if $(u+\mathfrak{D})\cap (v+\mathfrak{D})=\emptyset$. Hence, if $\{u(n)\}_{n=0}^{\infty}$ is a complete list of distinct coset representative of $\mathfrak{D}$ in $K^+$, then $\{\chi_{u(n)}\}_{n=0}^{\infty}$ is a list of distinct characters of $\mathfrak{D}$. It is proved in~\cite{Taib} that this list is complete. 

\begin{proposition}\label{p.com}
Let $\{u(n)\}_{n=0}^{\infty}$ be a complete list of distinct coset representatives of $\mathfrak{D}$ in $K^+$. Then $\{\chi_{u(n)}\}_{n=0}^{\infty}$ is a complete list of distinct characters of $\D$. Moreover, it is an orthonormal basis for $L^2(\D)$.
\end{proposition}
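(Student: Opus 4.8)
The plan is to verify the three requirements for an orthonormal basis in turn: distinctness, orthonormality, and completeness. The distinctness of the characters $\chi_{u(n)}|_{\D}$ has essentially been recorded already in the discussion immediately preceding the statement, since $\chi_u$ and $\chi_v$ agree on $\D$ precisely when $u-v\in\D$, and the $u(n)$ are by hypothesis representatives of distinct cosets of $\D$ in $K^+$. So the genuine content lies in orthonormality and in completeness.

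For orthonormality I would isolate the following computation as the key step: for $y\in K$,
\[
\int_{\D}\chi(yx)\,dx=\begin{cases}1,&y\in\D,\\[1mm]0,&y\notin\D.\end{cases}
\]
When $y\in\D$, the product $yx$ lies in $\D$ for every $x\in\D$ because $\D$ is a ring, and since $\chi$ is trivial on $\D$ the integrand is identically $1$, giving $|\D|=1$. When $y\notin\D$, write $|y|=q^k$ with $k\geq 1$, so that $y\D=\P^{-k}\supseteq\P^{-1}$; since $\chi$ is nontrivial on $\P^{-1}$ by its defining property, the map $x\mapsto\chi(yx)$ is a nontrivial character of the compact group $\D$, and the integral of a nontrivial character over a compact group vanishes. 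Applying this with $y=u(n)-u(m)$ and recalling that $\langle\chi_{u(n)},\chi_{u(m)}\rangle_{L^2(\D)}=\int_{\D}\chi\bigl((u(n)-u(m))x\bigr)\,dx$, one gets $1$ when $n=m$ and $0$ when $n\neq m$ (since then $u(n)-u(m)\notin\D$), which is exactly orthonormality.

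For completeness --- that the closed linear span of $\{\chi_{u(n)}|_{\D}\}$ is all of $L^2(\D)$ --- I would argue by duality. Since $\D$ is a compact abelian group, Fourier analysis on compact abelian groups (the Peter--Weyl theorem) says its full collection of characters is an orthonormal basis of $L^2(\D)$, so it suffices to show that every character of $\D$ occurs among the $\chi_{u(n)}|_{\D}$. Every character of the closed subgroup $\D\subset K^+$ extends to a character of $K^+$, and by the self-duality of $K$ every character of $K^+$ has the form $\chi_y$ for some $y\in K$. The computation above identifies the annihilator of $\D$ in $\widehat{K^+}\cong K$ with $\D$ itself, so $\chi_y|_{\D}$ depends only on the coset $y+\D$, and $y+\D\mapsto\chi_y|_{\D}$ is a bijection of $K^+/\D$ onto $\widehat{\D}$. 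As $\{u(n)\}$ runs over a complete set of coset representatives, the $\chi_{u(n)}|_{\D}$ therefore exhaust $\widehat{\D}$; together with orthonormality this gives the orthonormal basis. Alternatively, one may simply invoke the completeness of the distinct characters $\chi_{u(n)}|_{\D}$ established in~\cite{Taib} and conclude directly.

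The routine parts are the two cases of the displayed integral. The genuine input is completeness, which rests on Pontryagin duality together with the Peter--Weyl theorem (or on the cited result of Taibleson). The one point deserving care --- the main obstacle in a fully self-contained treatment --- is verifying that the annihilator of $\D$ in $\widehat{K^+}\cong K$ is exactly $\D$, i.e.\ that $\chi_y$ is trivial on $\D$ if and only if $y\in\D$; the nontrivial ``only if'' direction is precisely the nontriviality of $\chi$ on $\P^{-1}$ exploited above, and it is this defining property of $\chi$ that makes the parametrization of $\widehat{\D}$ by $K^+/\D$ work.
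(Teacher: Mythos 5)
Your proposal is correct, but it is worth noting how it relates to the paper's treatment: the paper does not really prove this proposition. The distinctness of the restricted characters is handled in the discussion immediately preceding the statement (exactly as in your first paragraph), the completeness is dispatched by citing Taibleson's book, and the orthonormal-basis assertion is left to the standard $L^2$ theory of compact abelian groups. What you have written is therefore a self-contained substitute for those citations, and it is sound. Your orthogonality computation --- $\int_{\D}\chi(yx)\,dx=1$ for $y\in\D$ and $=0$ for $y\notin\D$, the latter because $y\D=\P^{-k}\supseteq\P^{-1}$ forces $x\mapsto\chi(yx)$ to be a nontrivial character of the compact group $\D$ --- is the standard mechanism, and it correctly exploits the normalization of $\chi$ (trivial on $\D$, nontrivial on $\P^{-1}$). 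Your completeness argument, identifying the annihilator of $\D$ in $\widehat{K^+}\cong K$ with $\D$ itself so that $y+\D\mapsto\chi_y|_{\D}$ is a bijection of $K^+/\D$ onto $\widehat{\D}$, and then invoking Peter--Weyl, is essentially the argument underlying the result you could instead cite; the self-duality of $K^+$ and the surjectivity of the restriction map $\widehat{K^+}\to\widehat{\D}$ are the two standard inputs, and you invoke both correctly. In short: the paper buys brevity by deferring to the literature precisely at the points where you supply arguments, and your version makes the proposition self-contained at the cost of assuming Pontryagin duality and the self-duality of local fields as background.
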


Given such a list of characters $\{\chi_{u(n)}\}_{n=0}^{\infty}$, we define the \emph{Fourier coefficients} of $f\in L^1(\D)$ as
\[
\hat{f}\bigl(u(n)\bigr)=\int_{\D}f(x)\overline{\chi_{u(n)}(x)}dx.
\]
For $n\geq 0$, let $S_n f$ be the $n$th partial sum of the Fourier series of $f$:
\[
S_n f(x)=\sum_{k=0}^{n-1}\hat{f}\bigl(u(k)\bigr)\chi_{u(k)}(x).
\]
The series $\sum_{n=0}^{\infty}\hat{f}(u(n))\chi_{u(n)}(x)$ is called the \emph{Fourier series} of $f$. 

Let $\N_0=\N\cup \{0\}$. We will now write $\chi_n=\chi_{u(n)}|_\D$ for $n\in\N_0$. The \emph{Dirichlet kernels} are the functions
\[
D_n(x)=\sum_{k=0}^{n-1}\chi_k(x), n\geq 1,~\mbox{and}~D_0(x)\equiv 0. 
\]
From the standard $L^2$-theory for compact abelian groups, we conclude that the Fourier series of $f$ converges to $f$ in $L^2(\D)$ and \emph{Parseval's identity} holds:
\[
\int_{\D}|f(x)|^2\,dx=\sum\limits_{n\in\N_0}|\hat{f}\bigl(u(n)\bigr)|^2.
\]
Also, if $f\in L^1(\mathfrak{D})$ and $\hat f\bigl(u(n)\bigr)=0$ for all $n\in\N_0$, then $f=0$ a.e.

These results hold irrespective of the ordering of the characters. We now proceed to impose a natural order on the sequence $\{u(n):n\in\N_0\}$. Note that $\Gamma=\mathfrak{D}/\mathfrak{P}$ is isomorphic to the finite field $\mathbb{F}_q$ and $\mathbb{F}_q$ is a $c$-dimensional vector space over the field $\mathbb{F}_p$. We choose a set $\{1=\epsilon_0, \epsilon_1, \epsilon_2, \dots, \epsilon_{c-1}\}\subset\mathfrak{D}^*$ such that span$\{\epsilon_j\}_{j=0}^{c-1}\cong\mathbb{F}_q$.
For $n\in \N_0$ such that $0\leq n< q$, we have
\[
n=a_0+a_1 p+\dots+a_{c-1} p^{c-1},\quad 0\leq a_k<p, \quad k=0,1,\dots,c-1.
\]
Define
\begin{equation}\label{e.undef1}
u(n)=(a_0+a_1\epsilon_1+\dots+a_{c-1}\epsilon_{c-1})\mathfrak{p}^{-1}.
\end{equation}
Note that $\{u(n):n=0, 1,\dots, q-1\}$ is a complete set of coset representatives of $\D$ in $\P^{-1}$ so that we can write
\begin{equation}\label{e.pinvd}
\P^{-1}=\bigcup_{l=0}^{q-1}\bigl(u(l)+\D\bigr). 
\end{equation}

Now, for $n\geq 0$, write
\[
n=b_0+b_1q+b_2q^2+\dots+b_sq^s,\quad 0\leq b_k<q,\quad k=0,1,2,\dots,s,
\]
and define
\begin{equation}\label{e.undef2}
u(n)=u(b_0)+u(b_1)\mathfrak{p}^{-1}+\dots+u(b_s)\mathfrak{p}^{-s}.
\end{equation}
This defines $u(n)$ for all $n\in\N_0$. In general, it is not true that $u(m+n)=u(m)+u(n)$. But it follows that
\begin{equation}\label{eq.un}
u(rq^k+s)=u(r)\mathfrak{p}^{-k}+u(s)\quad{\rm if}~r\geq 0, k\geq 0~{\rm and}~0\leq s <q^k.
\end{equation}

In the following proposition we list some properties of $\{u(n): n\in\N_0\}$. We refer to~\cite{BJ} for a proof. 

\begin{proposition}\label{p.un}
For $n\in\N_0$, let $u(n)$ be defined as in \eqref{e.undef1} and \eqref{e.undef2}. Then
\begin{enumerate}
\item[(a)] $u(n)=0$ if and only if $n=0$. If $k\geq 1$, then $|u(n)|=q^k$ if and only if $q^{k-1}\leq n<q^k$.
\end{enumerate}
Moreover, if $K$ is a local field of positive characteristic, then
\begin{enumerate}
\item[(b)] $\{u(k):k\in\N_0\}=\{-u(k):k\in\N_0\}$;
\item[(c)] for a fixed $l\in\N_0$, we have $\{u(l)+u(k): k\in\N_0\}=\{u(k):k\in\N_0\}$.
\end{enumerate}
\end{proposition}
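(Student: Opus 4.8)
The plan is to handle (a) first, since it holds for every local field, and then to isolate the arithmetic feature of positive characteristic that drives (b) and (c).

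For (a), I would begin with the range $0\le n<q$. Writing $n=a_0+a_1p+\dots+a_{c-1}p^{c-1}$, the element $\alpha=a_0+a_1\epsilon_1+\dots+a_{c-1}\epsilon_{c-1}$ appearing in \eqref{e.undef1} lies in $\D$, and its residue in $\D/\P\cong\mathbb{F}_q$ is $\bar a_0+\bar a_1\bar\epsilon_1+\dots+\bar a_{c-1}\bar\epsilon_{c-1}$. Since the $\bar\epsilon_j$ form an $\mathbb{F}_p$-basis of $\mathbb{F}_q$, this residue vanishes precisely when every $a_j\equiv 0\ (\mathrm{mod}\ p)$, i.e.\ when $n=0$. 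Hence for $0<n<q$ the element $\alpha$ is a unit, so $|u(n)|=|\alpha|\,|\p^{-1}|=q$, while $u(0)=0$. For general $n$, I would write $n=b_0+b_1q+\dots+b_sq^s$ with $b_s\ne 0$ and expand $u(n)=\sum_{j=0}^s u(b_j)\p^{-j}$ via \eqref{e.undef2}. Each nonzero summand has $|u(b_j)\p^{-j}|=q\cdot q^{j}=q^{j+1}$, so the top term contributes $q^{s+1}$ and strictly dominates all the others; the sharpened ultrametric inequality \eqref{e.max} then forces $|u(n)|=q^{s+1}$. Since $q^{k-1}\le n<q^{k}$ is equivalent to the top $q$-ary digit of $n$ sitting in position $s=k-1$, this yields $|u(n)|=q^{k}$ exactly on that range, and in particular $u(n)=0$ if and only if $n=0$. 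Note this argument uses only the residue-basis property, which is why (a) is stated for arbitrary $K$.

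For (b) and (c) I would exploit that $K$ has characteristic $p$, so that $p\cdot x=0$ for every $x\in K$. The key lemma to prove is a digit-wise statement at the level $0\le b,c<q$: because $p\,\epsilon_i=0$, one has $(a_i+a_i')\epsilon_i=\bigl((a_i+a_i')\bmod p\bigr)\epsilon_i$ and $(-a_i)\epsilon_i=\bigl((-a_i)\bmod p\bigr)\epsilon_i$, so that $u(b)+u(c)=u(b\oplus c)$ and $-u(b)=u(b^{-})$ for indices $b\oplus c,\,b^{-}\in\{0,\dots,q-1\}$ obtained by adding, respectively negating, the base-$p$ digits modulo $p$. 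The crucial point is that there is no carry: the overflow past $p$ simply annihilates. This is exactly where positive characteristic enters (and why the analogous statement fails over $\Q_p$). Lifting these identities through \eqref{e.undef2} coefficient by coefficient, and observing that each $u(b_j)+u(c_j)=u(d_j)$ stays within the $\p^{-1}$-scale so that no carry propagates between the successive $\p^{-j}$ levels either, I obtain $u(m)+u(n)=u(N)$ and $-u(n)=u(N')$ for suitable $N,N'\in\N_0$.

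Consequently the set $\mathcal U=\{u(k):k\in\N_0\}$ is an additive subgroup of $K^+$: it contains $u(0)=0$ and is closed under addition and negation. Part (b) is then immediate, since $\{-u(k):k\in\N_0\}=-\mathcal U=\mathcal U$; and part (c) follows because $u(l)\in\mathcal U$ gives the coset identity $u(l)+\mathcal U=\mathcal U$, which is precisely $\{u(l)+u(k):k\in\N_0\}=\{u(k):k\in\N_0\}$. The main obstacle, and the step I would write out most carefully, is the carry-free digit arithmetic: verifying that addition and negation of the finite $q$-ary digit strings descend, via \eqref{e.undef1}--\eqref{e.undef2}, to genuine identities $u(m)+u(n)=u(N)$ and $-u(n)=u(N')$ in $K$, with all overflow absorbed by the relation $p\cdot x=0$.
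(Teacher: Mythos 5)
Your proof is correct: part (a) via the residue-field basis property plus the sharpened ultrametric equality \eqref{e.max}, and parts (b), (c) via carry-free digit arithmetic modulo $p$ (using $p\cdot x=0$ in characteristic $p$) showing $\{u(k):k\in\N_0\}$ is an additive subgroup of $K^+$. The paper gives no proof of Proposition~\ref{p.un}, deferring instead to~\cite{BJ}, and your argument is essentially the standard one found there, so there is nothing to flag beyond noting that the subgroup observation cleanly packages (b) and (c) together.
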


\subsection{Weight functions and their properties}
As mentioned earlier, a weight $w$ on $K$ is a locally integrable function which is positive a.e. For any measurable subset $A$ of $K$, we define $w(A)=\int_A w(x)\,dx$. We say that a weight $w$ has the \emph{doubling property} if there exists a constant $C>0$ such that
\[
w(x+\P^{k-1})\leq C w(x+\P^k)
\]
for all $k\in\Z$ and $x\in K$. We mentioned about the notion of $A_p$ weights in the introduction. A weight $w$ is said to be in the \emph{Muckenhoupt $A_p$ class} if there exists a constant $C>0$ such that
\[
\Bigl(\frac{1}{|B|}\int_{B}w(x)\,dx\Bigr)\Bigl(\frac{1}{|B|}\int_{B}w(x)^{-\frac{1}{p-1}}\,dx\Bigr)^{p-1}\leq C
\]
for every ball $B$ in $K$. In this case, we also say that $w$ is an $A_p$ weight or $w\in A_p$. The infimum of all such $C$ appearing above is called the \emph{$A_p$ characteristic of $w$} and is denoted by $[w]_{A_p}$. Now, we mention some properties of the $A_p$ weights which will be used later. The proofs are similar to the corresponding results on the Euclidean spaces.

\begin{proposition} 
Let $1<p<q<\infty$ and $p'$ denote the conjugate index of $p$. Then
\begin{enumerate} 
\item[(a)] $A_p\subset A_q$,
\item[(b)] $w\in A_p$ if and only if $w^{1-p'}\in A_{p'}$.
\end{enumerate}
\end{proposition}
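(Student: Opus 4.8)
My plan is to verify both statements by directly manipulating the defining $A_p$ inequality; neither uses the local field structure beyond the fact that balls are measurable sets of finite positive measure, so the argument is identical to the Euclidean one. Throughout I would fix a ball $B$ and abbreviate $A=\frac{1}{|B|}\int_B w(x)\,dx$ and $D=\frac{1}{|B|}\int_B w(x)^{-1/(p-1)}\,dx$, so that the $A_p$ condition \eqref{nec} reads $A\,D^{p-1}\le C$, with $[w]_{A_p}$ the smallest admissible $C$.

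For part (a), I would suppose $w\in A_p$ and set $\sigma=w^{-1/(p-1)}$. Since $q>p$, the exponent $t:=(p-1)/(q-1)$ lies in $(0,1)$, and one checks $w^{-1/(q-1)}=\sigma^{t}$. The function $x\mapsto x^{t}$ is concave on $[0,\infty)$, so Jensen's inequality for the probability measure $|B|^{-1}\,dx$ on $B$ gives $\frac{1}{|B|}\int_B \sigma^{t}\le\left(\frac{1}{|B|}\int_B \sigma\right)^{t}=D^{t}$. Raising to the power $q-1$ and using $t(q-1)=p-1$ yields $\left(\frac{1}{|B|}\int_B w^{-1/(q-1)}\right)^{q-1}\le D^{p-1}$. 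Multiplying by $A$ shows the $A_q$ quantity for $w$ on $B$ is at most $A\,D^{p-1}\le[w]_{A_p}$; as $B$ is arbitrary, $w\in A_q$ with $[w]_{A_q}\le[w]_{A_p}$.

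For part (b), I would set $\sigma=w^{1-p'}$ and record the exponent identities coming from $p'=p/(p-1)$: namely $1-p'=-1/(p-1)$, so $\sigma=w^{-1/(p-1)}$, together with $p'-1=1/(p-1)$ and $1/(p'-1)=p-1$, whence $\sigma^{-1/(p'-1)}=\sigma^{-(p-1)}=w$. Substituting these into the $A_{p'}$ expression for $\sigma$ then gives
\[
\left(\frac{1}{|B|}\int_B \sigma\right)\left(\frac{1}{|B|}\int_B \sigma^{-1/(p'-1)}\right)^{p'-1}=D\cdot A^{1/(p-1)}=\left(A\,D^{p-1}\right)^{1/(p-1)}.
\]
Thus the $A_{p'}$ characteristic of $\sigma$ on $B$ is exactly the $(p-1)$th root of the $A_p$ characteristic of $w$ on $B$. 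Taking the supremum over all balls shows $w\in A_p$ if and only if $w^{1-p'}\in A_{p'}$, with $[w^{1-p'}]_{A_{p'}}=[w]_{A_p}^{1/(p-1)}$.

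The computations are routine; the only points requiring care are the exponent bookkeeping in (b) and, in (a), applying Jensen's inequality in the correct direction. The latter is the conceptual heart of the matter: the monotonicity $A_p\subset A_q$ is driven precisely by the concavity of $x\mapsto x^{t}$ for $t\in(0,1)$, and the hypothesis $p<q$ is exactly what guarantees $t<1$.
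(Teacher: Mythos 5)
Your proof is correct, and it is essentially the paper's own approach: the paper omits the details, noting only that ``the proofs are similar to the corresponding results on the Euclidean spaces,'' and your argument is precisely that standard Euclidean proof (Jensen/concavity for the inclusion $A_p\subset A_q$, exponent bookkeeping for the duality $w\in A_p\iff w^{1-p'}\in A_{p'}$), which carries over verbatim since it uses only that balls have finite positive measure.
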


We will also need the following result. For a proof, we refer to~\cite{CH}. 

\begin{theorem}\label{rhi}
Let $1<p<\infty$ and $w\in A_p$. Then there exists a constant $C$ and $\epsilon>0$, depending only on $p$ and $[w]_{A_p}$, such that for any ball $B$
\[
\Bigl(\frac{1}{|B|}\int_B w(y)^{1+\epsilon}\,dy\Bigr)^{\frac{1}{1+\epsilon}}\leq C\frac{1}{|B|}\int_B w(y)\,dy.
\]
\end{theorem}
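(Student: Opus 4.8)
The plan is to give a self-contained proof of the reverse Hölder inequality that exploits the ultrametric geometry of $K$, which makes the Calder\'on--Zygmund machinery exact. By Proposition~\ref{p.balls}\,(b) any two balls are either disjoint or nested, and each ball $h+\P^k$ of measure $q^{-k}$ splits into exactly $q$ disjoint sub-balls (the cosets of $\P^{k+1}$ inside it), each of measure $q^{-(k+1)}$; thus the family of all balls forms an exact tree with doubling constant precisely $q$, and no covering lemma is needed. The first step is to upgrade the $A_p$ condition to an $A_\infty$-type estimate. For any ball $B$ and measurable $E\subseteq B$, Hölder's inequality applied to $|E|=\int_E w^{1/p}w^{-1/p}\,dx$ together with the $A_p$ condition~\eqref{nec} yields
\be
\Bigl(\frac{|E|}{|B|}\Bigr)^{p}\leq [w]_{A_p}\,\frac{w(E)}{w(B)}.
\ee
Applying this to $B\setminus E$ gives the complementary form: there exist $0<\gamma<1$ and $0<\delta<1$, depending only on $p$ and $[w]_{A_p}$, such that $|E|\leq\gamma|B|$ implies $w(E)\leq\delta\,w(B)$ (explicitly $\delta=1-(1-\gamma)^p/[w]_{A_p}$).

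The second step is a local stopping-time decomposition. Fix a ball $B$ and write $\overline{w}=w(B)/|B|$. For each height $\lambda>\overline{w}$, select the maximal sub-balls $B_j\subseteq B$ on which the average of $w$ exceeds $\lambda$. By exact nesting the parent of each $B_j$ has average at most $\lambda$, and doubling by the factor $q$ gives $\lambda<w(B_j)/|B_j|\leq q\lambda$; moreover $\Omega_\lambda:=\bigcup_j B_j$ is exactly the super-level set of the local maximal function, and the Lebesgue differentiation theorem along the nested ball basis shows $w>\lambda$ a.e.\ on $\Omega_\lambda$ and $w\leq\lambda$ a.e.\ off it. The crux is then to extract geometric decay of the level sets. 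Comparing heights $\lambda$ and $2q\lambda$, each stopping ball at the higher level sits inside a unique $B_j$ from the lower level; since $w(B_j)/|B_j|\leq q\lambda$, Chebyshev's inequality gives $|\Omega_{2q\lambda}\cap B_j|\leq w(B_j)/(2q\lambda)\leq\tfrac{1}{2}|B_j|$, and the $A_\infty$ estimate with $\gamma=\tfrac{1}{2}$ yields $w(\Omega_{2q\lambda}\cap B_j)\leq\delta\,w(B_j)$. Summing over $j$ gives $w(\Omega_{2q\lambda})\leq\delta\,w(\Omega_\lambda)$, and iterating produces $w(\Omega_{(2q)^m\overline{w}})\leq\delta^m\,w(B)$.

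The final step is to integrate this decay. Writing $\int_B w^{1+\epsilon}\,dx=\epsilon\int_0^\infty\lambda^{\epsilon-1}w(\{x\in B:w(x)>\lambda\})\,d\lambda$, I would split the integral at $\lambda=\overline{w}$; the low part is bounded by $\overline{w}^{\,\epsilon}w(B)$, while for the high part I use $w(\{w>\lambda\})\leq w(\Omega_\lambda)$ together with the geometric decay just obtained, summed over the dyadic-in-$\lambda$ bands $\lambda\in((2q)^m\overline{w},(2q)^{m+1}\overline{w}]$. Choosing $\epsilon>0$ so small that $(2q)^\epsilon\delta<1$ makes the resulting series converge and gives $\int_B w^{1+\epsilon}\,dx\leq C\,\overline{w}^{\,\epsilon}w(B)=C\,(w(B)/|B|)^{\epsilon}w(B)$, which rearranges to the asserted inequality with $C,\epsilon$ depending only on $q$, $p$ and $[w]_{A_p}$ (here $q$ is fixed once $K$ is). I expect the single genuine obstacle to be the geometric-decay step: one must combine the stopping-time comparison at the two heights with the $A_\infty$ property in the right order and keep careful track of the doubling factor $q$. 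Everything else---the Hölder computation of the first step, the validity of Lebesgue differentiation along the ball basis, and the layer-cake bookkeeping---is routine, and in fact the argument is cleaner than its Euclidean counterpart because the exact nesting of balls removes all overlap issues and obviates any Vitali- or Besicovitch-type covering.
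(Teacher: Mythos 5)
Your proposal is correct, but it takes a genuinely different route from the paper: the paper does not prove Theorem~\ref{rhi} at all, it simply cites the local-field literature (the paper of Chuong and Hung, reference~\cite{CH}) for a proof. What you have written is a complete, self-contained argument, and it is sound. The three steps all check out: the H\"older computation giving $(|E|/|B|)^p\leq [w]_{A_p}\,w(E)/w(B)$ and its complementary $A_\infty$-form are standard and correct; the stopping-time decomposition is legitimate in this setting because the radii of balls containing a point and contained in $B$ form a discrete set bounded above, so maximal stopping balls exist and are pairwise disjoint by Proposition~\ref{p.balls}\,(b); the parent-ball bound $\lambda< w(B_j)/|B_j|\leq q\lambda$, the Chebyshev estimate $|\Omega_{2q\lambda}\cap B_j|\leq\frac12|B_j|$, the resulting decay $w(\Omega_{2q\lambda})\leq\delta\,w(\Omega_\lambda)$, and the layer-cake summation with $(2q)^\epsilon\delta<1$ are all correct, and the constants depend only on $q$ (fixed by the field), $p$ and $[w]_{A_p}$. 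What your approach buys is independence from~\cite{CH} and an argument that is, as you say, cleaner than the Euclidean one since exact nesting of balls removes all covering-lemma issues; what the paper's approach buys is brevity, since the result is not the point of the article. One small inaccuracy to fix: your claim that Lebesgue differentiation shows $w>\lambda$ a.e.\ \emph{on} $\Omega_\lambda$ is false in general (the stopping balls only have \emph{average} exceeding $\lambda$; $w$ itself may dip below $\lambda$ on a positive-measure subset of a stopping ball). Fortunately you never use that half of the claim; the half you do use, namely $\{x\in B: w(x)>\lambda\}\subseteq\Omega_\lambda$ up to a null set (equivalently $w\leq\lambda$ a.e.\ off $\Omega_\lambda$), is exactly what Lebesgue differentiation gives, so the proof stands after deleting the erroneous clause.
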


The above inequality is known as the \emph{reverse H\"older inequality}. The following simple corollaries of the reverse H\"older inequality will be useful later.

\begin{corollary}\label{cor}
Let $w\in A_p$, $1<p<\infty$. Then, there exists $r$ with $1<r<p$ such that $w$ is also in $A_r$.
\end{corollary}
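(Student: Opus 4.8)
The plan is to derive this from the reverse H\"older inequality (Theorem~\ref{rhi}) by an exponent-duality argument. The statement claims that $A_p$ is a genuinely open condition from below: membership in $A_p$ forces membership in $A_r$ for some $r$ strictly smaller than $p$. The natural quantity to which the reverse H\"older inequality should be applied is not $w$ itself but rather $\sigma = w^{-1/(p-1)} = w^{1-p'}$, the weight whose average appears in the second factor of the $A_p$ condition~\eqref{nec}. Indeed, by part~(b) of the preceding proposition, $w\in A_p$ is equivalent to $\sigma = w^{1-p'}\in A_{p'}$, so $\sigma$ itself satisfies a reverse H\"older inequality.

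The key steps, in order, would be as follows. First I would record that $w\in A_p$ iff $\sigma := w^{1-p'}\in A_{p'}$, and apply Theorem~\ref{rhi} to $\sigma$ to obtain an exponent $\epsilon>0$ (depending only on $p$ and $[w]_{A_p}$) and a constant $C$ such that for every ball $B$,
\[
\Bigl(\frac{1}{|B|}\int_B \sigma(y)^{1+\epsilon}\,dy\Bigr)^{\frac{1}{1+\epsilon}}\leq C\,\frac{1}{|B|}\int_B \sigma(y)\,dy.
\]
Second, I would choose $r$ so that the exponent $-1/(r-1)$ attached to $w$ in the $A_r$ condition matches $(1+\epsilon)$ times the exponent $-1/(p-1)$ appearing in $\sigma$; that is, I set
\[
\frac{1}{r-1}=(1+\epsilon)\,\frac{1}{p-1},
\]
which gives $r-1=(p-1)/(1+\epsilon)$, so that $1<r<p$ since $\epsilon>0$. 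With this choice, $w^{-1/(r-1)}=\sigma^{1+\epsilon}$. Third, I would write out the $A_r$ quantity for $w$,
\[
\Bigl(\frac{1}{|B|}\int_B w\Bigr)\Bigl(\frac{1}{|B|}\int_B w^{-\frac{1}{r-1}}\Bigr)^{r-1}
=\Bigl(\frac{1}{|B|}\int_B w\Bigr)\Bigl(\frac{1}{|B|}\int_B \sigma^{1+\epsilon}\Bigr)^{r-1},
\]
and use the reverse H\"older bound above to control the second factor by a constant multiple of $\bigl(\frac{1}{|B|}\int_B\sigma\bigr)^{(1+\epsilon)(r-1)}=\bigl(\frac{1}{|B|}\int_B\sigma\bigr)^{p-1}$, since $(1+\epsilon)(r-1)=p-1$ by the choice of $r$. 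The product then collapses to a constant times the original $A_p$ quantity for $w$, which is bounded by $[w]_{A_p}$, uniformly over all balls $B$.

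The only mild subtlety — and the step I would watch most carefully — is the bookkeeping of exponents and the verification that the reverse H\"older inequality is being applied to the correct weight with the correct constant dependence; once $r$ is chosen as above, the estimate is a one-line application of the raised-power bound, so there is no substantive analytic obstacle here. I would remark that the argument is entirely formal and identical to the Euclidean case, the only inputs being Theorem~\ref{rhi} and the duality $w\in A_p\iff w^{1-p'}\in A_{p'}$, both of which are available in the local-field setting as stated above; in particular, Proposition~\ref{p.balls} guarantees that balls behave well enough for the averaging arguments to go through without change.
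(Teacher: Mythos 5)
Your proof is correct and takes exactly the route the paper intends: the paper states this result without an explicit proof, presenting it as a ``simple corollary'' of the reverse H\"older inequality, and your argument --- applying Theorem~\ref{rhi} to the dual weight $\sigma=w^{1-p'}$ (legitimate since $w\in A_p$ iff $\sigma\in A_{p'}$) and choosing $r$ with $r-1=(p-1)/(1+\epsilon)$ so that $w^{-1/(r-1)}=\sigma^{1+\epsilon}$ --- is precisely the standard derivation being alluded to. The exponent bookkeeping is right, and the final bound $C^{p-1}[w]_{A_p}$ for the $A_r$ constant follows as you describe, so nothing further is needed.
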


\begin{corollary}\label{cor2}
Let $w\in A_p$, $1<p<\infty$. Then, there exists a $\delta>0$ and $C>0$ such that for any ball $B$ and for any subset $E$ of $B$, we have
\begin{equation}\label{Ainfty}
{w(E)}{|B|^\delta}\leq C{w(B)}{|E|^\delta}. 
\end{equation}
\end{corollary}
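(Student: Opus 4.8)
The plan is to derive the comparison~\eqref{Ainfty} directly from the reverse H\"older inequality (Theorem~\ref{rhi}), which already carries all of the analytic content. The corollary itself is then a short bookkeeping computation that couples reverse H\"older with the ordinary H\"older inequality. First I would invoke Theorem~\ref{rhi} to fix $\epsilon>0$ and a constant $C$, both depending only on $p$ and $[w]_{A_p}$, so that the reverse H\"older estimate holds on every ball $B$. I would then declare the exponent $\delta=\frac{\epsilon}{1+\epsilon}$, which is precisely the power of $|B|$ and $|E|$ that will survive in the final inequality.

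Given a ball $B$ and a measurable subset $E\subseteq B$, the next step is to estimate $w(E)=\int_B{\bf 1}_E\,w\,dx$ by H\"older's inequality with the conjugate exponents $1+\epsilon$ and $(1+\epsilon)'=\frac{1+\epsilon}{\epsilon}$, which yields
\[
w(E)\leq\Bigl(\int_B w^{1+\epsilon}\,dx\Bigr)^{\frac{1}{1+\epsilon}}|E|^{\frac{\epsilon}{1+\epsilon}}.
\]
I would then control the first factor using Theorem~\ref{rhi}: writing $\int_B w^{1+\epsilon}=|B|\cdot\frac{1}{|B|}\int_B w^{1+\epsilon}$ and applying the reverse H\"older inequality gives
\[
\Bigl(\int_B w^{1+\epsilon}\,dx\Bigr)^{\frac{1}{1+\epsilon}}\leq C\,|B|^{\frac{1}{1+\epsilon}}\Bigl(\frac{1}{|B|}\int_B w\,dx\Bigr)=C\,|B|^{-\frac{\epsilon}{1+\epsilon}}\,w(B),
\]
where I have used $\frac{1}{1+\epsilon}-1=-\frac{\epsilon}{1+\epsilon}=-\delta$. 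Combining the two displays and recalling the choice of $\delta$ produces $w(E)\,|B|^{\delta}\leq C\,w(B)\,|E|^{\delta}$, which is exactly~\eqref{Ainfty}.

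I do not anticipate any genuine obstacle here, since the difficult self-improving estimate is already packaged in Theorem~\ref{rhi}. The only points demanding care are verifying that H\"older's inequality is applied with the correct conjugate exponent $\frac{1+\epsilon}{\epsilon}$ and that the powers of $|B|$ combine as $\frac{1}{1+\epsilon}-1=-\delta$; the dependence of $C$ and $\delta$ on $p$ and $[w]_{A_p}$ alone is inherited verbatim from Theorem~\ref{rhi}. In fact, because balls in a local field are nested by Proposition~\ref{p.balls}, no covering or geometric decomposition of $B$ is required, so the argument is even cleaner than its Euclidean counterpart.
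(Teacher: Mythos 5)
Your proof is correct and follows exactly the route the paper intends: the paper states Corollary~\ref{cor2} without proof as a ``simple corollary of the reverse H\"older inequality,'' and your argument---H\"older with exponents $1+\epsilon$ and $\tfrac{1+\epsilon}{\epsilon}$ applied to $w(E)=\int_B{\bf 1}_E\,w\,dx$, followed by Theorem~\ref{rhi} to absorb $\bigl(\int_B w^{1+\epsilon}\bigr)^{1/(1+\epsilon)}$ into $C|B|^{-\delta}w(B)$ with $\delta=\tfrac{\epsilon}{1+\epsilon}$---is precisely the standard derivation being invoked. The exponent bookkeeping $\tfrac{1}{1+\epsilon}-1=-\delta$ is right, and the constants depend only on $p$ and $[w]_{A_p}$ as claimed.
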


\subsection{Maximal function}
For $f\in L^1_{\rm loc}(K)$, the \emph{Hardy-Littlewood maximal function} $Mf$ is defined analogously as in the Euclidean spaces by
\[
Mf(x)=\sup_B\frac{1}{|B|}\int_B|f(y)|\,dy, \quad x\in K,
\]
where the supremum is taken over all balls $B$ containing $x$. 

The balls in a local field are dyadic in nature, in the sense that given any two balls, either they are disjoint or one ball contains the other (see Proposition~\ref{p.balls}). So, the maximal function $M$ is essentially like the dyadic maximal function in $\R^n$. If a function $f$ on $\R$ is supported on the positive real axis, then the dyadic maximal function of $f$ is zero almost everywhere on the negative axis. On the other hand, in a local field $K$, if $Mf(x_0)=0$ at some $x_0\in K$ with $|x_0|=q^k$, then $\P^{-l}$ is a ball that contains $x_0$ whenever $l\geq k$. Hence, the average of $|f|$ is zero over all  such balls. This shows that $f=0$ a.e. Therefore, in contrast to the dyadic maximal function in $\R^n$, $Mf$ can never be zero at any point in a local field unless $f=0$ a.e.

Next, for $1<s<\infty$, we define $M_s f(x)=\bigl(M|f|^s(x)\bigr)^\frac{1}{s}$. We also define the \emph{sharp maximal function} $f^\sharp$ analogously as in $\R^n$. For $f\in L^1_{\rm loc}(K)$,
\[
f^\sharp(x)=\sup_B\frac{1}{|B|}\int_{B}|f(y)-f_B|\,dy, 
\]
where $f_B=\frac{1}{|B|}\int_{B}f(x)\,dx$ is the average of $f$ over the ball $B$, and the supremum is taken over all balls $B$ containing $x$. 

We finish this section with a result that relates the $L^p(w)$-norms of $Mf$ and $f^\sharp$. This will be used in the proof of our main theorem. 
 
\begin{theorem}\label{T.sharp}
Let $1<p<\infty$ and $w\in A_p$. Then, there exists a constant $C_{p,w}>0$, depending only on $p$ and $w$, such that
\begin{equation}\label{Mtosharp}
\int_K {Mf(x)}^p w(x)\,dx\leq C_{p,w}\int_K {f^\sharp(x)}^p w(x)\,dx 
\end{equation} 
for every $f\in L^p(w)$. 
\end{theorem}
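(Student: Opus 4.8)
The plan is to prove \eqref{Mtosharp} through a weighted good-$\lambda$ inequality of Fefferman--Stein type, exploiting that in a local field the balls containing a fixed point form a nested increasing family $\{x+\P^k\}_{k\in\Z}$ (Proposition~\ref{p.balls}), so that $M$ behaves exactly like a dyadic (martingale) maximal operator. In particular $M$ satisfies the weak type $(1,1)$ bound with constant $1$: if $g\in L^1(K)$, then $\{Mg>\alpha\}$ is a disjoint union of maximal balls $\{Q_j\}$ on each of which the average of $|g|$ exceeds $\alpha$, whence $|\{Mg>\alpha\}|=\sum_j|Q_j|\le\alpha^{-1}\|g\|_1$. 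This elementary bound and the $A_\infty$ estimate \eqref{Ainfty} of Corollary~\ref{cor2} are the only structural inputs I will need.

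The heart of the matter is the good-$\lambda$ inequality
\[
w\bigl(\{Mf>2q\lambda,\ f^\sharp\le\gamma\lambda\}\bigr)\le C\gamma^\delta\, w\bigl(\{Mf>\lambda\}\bigr),\qquad \lambda>0,
\]
valid for all small $\gamma>0$, with $\delta$ the exponent from Corollary~\ref{cor2}. To prove it I would first decompose $\Omega_\lambda=\{Mf>\lambda\}$ into its maximal balls $\{Q_j\}$: these are pairwise disjoint, cover $\Omega_\lambda$, and by maximality satisfy $\lambda<\frac{1}{|Q_j|}\int_{Q_j}|f|\le q\lambda$, the factor $q$ being the measure ratio of a ball to its parent. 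Fix $j$ and set $E_j=\{x\in Q_j:Mf(x)>2q\lambda,\ f^\sharp(x)\le\gamma\lambda\}$; if $E_j=\emptyset$ there is nothing to estimate, so choose $x_j\in E_j$, which in particular has $f^\sharp(x_j)\le\gamma\lambda$. For $x\in Q_j$ every ball $B\ni x$ strictly larger than $Q_j$ obeys $\frac{1}{|B|}\int_B|f|\le\lambda$ by maximality, so $Mf(x)\le M\bigl(f{\bf 1}_{Q_j}\bigr)(x)$ once it exceeds $\lambda$; together with $|f_{Q_j}|\le q\lambda$ this shows that $Mf(x)>2q\lambda$ forces $M\bigl((f-f_{Q_j}){\bf 1}_{Q_j}\bigr)(x)>q\lambda$. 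Applying the weak type $(1,1)$ bound to $(f-f_{Q_j}){\bf 1}_{Q_j}$ and then the definition of $f^\sharp$ at $x_j$ gives
\[
|E_j|\le\frac{1}{q\lambda}\int_{Q_j}|f-f_{Q_j}|\le\frac{|Q_j|}{q\lambda}\,f^\sharp(x_j)\le\frac{\gamma}{q}\,|Q_j|.
\]
Invoking \eqref{Ainfty} with $E=E_j\subseteq Q_j=B$ yields $w(E_j)\le C(\gamma/q)^\delta w(Q_j)$, and summing over $j$ (using $\sum_jw(Q_j)=w(\Omega_\lambda)$) gives the good-$\lambda$ inequality.

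With the good-$\lambda$ inequality in hand, I would integrate against $p\lambda^{p-1}\,d\lambda$ after the substitution $\lambda\mapsto 2q\lambda$ and the inclusion $\{Mf>2q\lambda\}\subseteq\{Mf>2q\lambda,\ f^\sharp\le\gamma\lambda\}\cup\{f^\sharp>\gamma\lambda\}$, using the layer-cake identity $\int_K(Mf)^pw=p\int_0^\infty\lambda^{p-1}w(\{Mf>\lambda\})\,d\lambda$. This produces
\[
(2q)^{-p}\int_K(Mf)^pw\le C(\gamma/q)^\delta\int_K(Mf)^pw+\gamma^{-p}\int_K(f^\sharp)^pw,
\]
and choosing $\gamma$ so small that $C(\gamma/q)^\delta<\tfrac12(2q)^{-p}$ lets me absorb the first term on the right into the left, giving \eqref{Mtosharp}.

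The main obstacle, and the feature specific to local fields, lives precisely at the formation of maximal balls and at the absorption, and is exactly the phenomenon noted before the theorem: $Mf$ never vanishes, so for a general $f\in L^p(w)$ the set $\{Mf>\lambda\}$ need not be a proper subset of $K$ and its Calderón--Zygmund decomposition may fail (there is no largest ball), while the absorption is legitimate only once $\int_K(Mf)^pw<\infty$, which is what we are bounding. I would resolve both at once by running the whole argument with the truncated operator $M^Rf(x)=\sup_{k\ge -R}q^k\int_{x+\P^k}|f|$, for which every point lies in a largest admissible ball $x+\P^{-R}$, so maximal balls always exist and the level sets are proper; the good-$\lambda$ inequality holds verbatim for $M^R$ (using $M^R\le M$ in the weak-type step), giving $\int_K(M^Rf)^pw\le C_{p,w}\int_K(f^\sharp)^pw$ uniformly in $R$, after which $R\to\infty$ by monotone convergence since $M^Rf\uparrow Mf$. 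The one genuinely technical verification I expect to require care is the a priori finiteness $\int_K(M^Rf)^pw<\infty$ for $f\in L^p(w)$: localizing to a single ball of radius $q^R$, on which $f$ is integrable and $M^Rf$ agrees with the maximal function of an $L^1$ function, one balances the decay of that maximal function against the $A_p$-controlled growth of $w$.
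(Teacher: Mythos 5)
Your overall strategy coincides with the paper's: the paper proves Theorem~\ref{T.sharp} by citing Theorem~2.20 of~\cite{GR}, whose argument is exactly your good-$\lambda$ inequality built from the maximal-ball (Calder\'on--Zygmund) decomposition, the weak $(1,1)$ bound for $M$, and the $A_\infty$ estimate \eqref{Ainfty} of Corollary~\ref{cor2}, followed by layer-cake integration and absorption. Your second and third paragraphs carry out that core computation correctly. The difficulty is the device you introduce in the final paragraph to justify the decomposition and the absorption.

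The truncation creates a genuine gap. For $M^R$, a maximal ball $Q_j$ of the top admissible radius $q^R$ has no admissible parent, so maximality yields only the lower bound $\lambda<\frac{1}{|Q_j|}\int_{Q_j}|f|$; the upper bound $\frac{1}{|Q_j|}\int_{Q_j}|f|\le q\lambda$, which you deduce from the parent's average being $\le\lambda$, is unavailable at the top scale, and $|f_{Q_j}|$ can be arbitrarily large compared with $\lambda$. Your key implication ``$M^Rf(x)>2q\lambda$ forces $M\bigl((f-f_{Q_j}){\bf 1}_{Q_j}\bigr)(x)>q\lambda$'' uses $|f_{Q_j}|\le q\lambda$ and therefore breaks down on exactly those balls, so the good-$\lambda$ inequality does \emph{not} hold ``verbatim'' for $M^R$; the top-scale balls would need a separate and genuinely different estimate. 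Moreover, your closing sketch of the a priori finiteness of $\int_K(M^Rf)^pw$ cannot work as described: for a function that is merely in $L^1$ of a ball, the only decay available is the weak $(1,1)$ bound $|\{Mg>t\}|\lesssim t^{-1}$, and converting it to $w$-measure through \eqref{Ainfty} costs the exponent $\delta$, which necessarily satisfies $\delta\le 1<p$; the resulting layer-cake integrand $t^{p-1-\delta}$ diverges at infinity. Weighted finiteness genuinely requires the weighted maximal theorem, not an $A_\infty$ balancing argument.

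Both problems disappear, and the truncation becomes unnecessary, if you use two facts already available in the paper. First, for $f\in L^p(w)$ with $w\in A_p$, H\"older's inequality combined with the $A_p$ condition gives $\frac{1}{|B|}\int_B|f|\le [w]_{A_p}^{1/p}\,\|f\|_{L^p(w)}\,w(B)^{-1/p}$ for every ball $B$, and $w(B)\to\infty$ as $B$ increases to $K$ by \eqref{Ainfty}; hence averages over large balls containing a fixed point tend to $0$, so for every $\lambda>0$ the nested family of balls containing $x$ with average $>\lambda$ is bounded above whenever it is nonempty, and maximal balls for the full operator $M$ exist, each with an honest parent. Second, $\int_K(Mf)^pw<\infty$ follows from Theorem~\ref{max} (equivalently Theorem~\ref{lca}), which the paper establishes independently of Theorem~\ref{T.sharp}, so the absorption is legitimate with no circularity. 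With these two observations your good-$\lambda$ argument runs for $M$ itself and the proof closes; this is precisely the shape of the argument in~\cite{GR} that the paper invokes.
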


\begin{proof}
We refer the reader to Theorem $2.20$ of~\cite{GR} for a proof in the Euclidean spaces. A quick inspection shows that the arguments given there use Corollary~\ref{cor2}, the weak $(1,1)$ property of the maximal operator $M$ and Calder\'on-Zygmund decomposition. The local field version of Calder\'on-Zygmund  decomposition can be found in~\cite{Phillips} and~\cite{Taib}. Hence, the assertion carries over to our setting as well.
\end{proof}


\section{Maximal function and $A_p$ weights}\label{s.HLM}
On the Euclidean spaces, it is well known that the Hardy-Littlewood maximal operator is bounded on $L^p(w)$ for $A_p$ weights. Several proofs of this fact are available in the literature and almost all of these proofs can be adapted to the setting of local fields, for example, see~\cite{CH}. In~\cite{CH}, the authors assumed that the weights $w$ satisfy a doubling condition. For our proof of the main result, we need this result which is applicable to weights not necessarily having the doubling property. In this section, we improve their result by removing the doubling condition. We also talk about the relation between the operator norm of $M$ on $L^p(w)$ and the $A_p$ characteristic of $w$ in the sense of Buckley~\cite{Buck}. 

The theory of weighted norm inequalities for the maximal operator can also be extended to the more general setting of the spaces of homogeneous type, see~\cite{Hyto}. A space of homogeneous type is a quasi-metric space $X$ with quasi-metric $d$ such that the $d$-balls are open sets, and $\mu$ is a regular measure defined on the $\sigma$-algebra containing the $d$-balls that satisfies the ``doubling condition", i.\,e., there is a constant $A$ such that the measure of a ball of radius $2r$ is at most $A$ times the measure of the ball of radius $r$ with the same centre. Observe that local fields are spaces of homogeneous type. In another direction, Sauer~\cite{Sau} considered this problem on general locally compact abelian groups satisfying certain covering properties. In the same setting, Paternostro and Rela~\cite{PR} established the sharp bound for the norm of the maximal operator. Now we briefly describe their setting and the corresponding results and deduce the boundedness of the maximal operator $M$ on weighted spaces of local fields without any restriction on the weights. 

Let $G$ be a locally compact abelian group with a measure $\mu$ that is inner regular and such that $\mu(K)<\infty$ for every compact set $K\subset G$. Edwards and Gaudry~\cite{EG} defined the concept of covering families in a locally compact abelian group as follows.

\begin{definition}
Let $G$ be a locally compact abelian group. A collection $\{U_i:i\in\Z\}$ is a \emph{covering family} for $G$ if
$\{{U_i}:i\in\Z\}$ is an increasing base of relatively compact neighbourhoods of $0$, $\bigcup_{i\in\Z}U_{i}=G$, $\bigcap_{i\in \Z}U_{i}=\{0\}$, and there exists a constant $D\geq 1$ and an increasing function $\theta:\Z\rightarrow\Z$ such that for any $i\in \Z$ and any $x\in G$
\begin{itemize}
\item[(a)] $i\leq\theta(i)$;
\item[(b)] $U_{i}-U_{i}\subset U_{\theta(i)}$;
\item[(c)] $\mu(x+U_{\theta(i)})\leq D\mu(x+U_{i})$.  
\end{itemize}
\end{definition}

For each $x\in G$, the set $x+U_i$ will be called a \emph{base set} and the collection of all base sets will be denoted by $\mathcal{B}=\{x+U_i:x\in G, i\in\Z\}$. On such locally compact abelian groups, Edwards and Gaudry~\cite{EG} defined the analogue of Hardy-Littlewood maximal operator using these base sets and proved its boundedness on the $L^p$ spaces. In~\cite{Sau}, Sauer proved a weighted bound for the maximal operator on locally compact abelian groups and asked whether it is possible to obtain a sharp result as in Buckley~\cite{Buck} in this setting. Recently, Paternostro and Rela~\cite{PR} answered this question affirmatively. 

\begin{theorem}[\cite{PR}]\label{lca}
Let $M$ be the Hardy-Littlewood maximal function on $G$, $1<p<\infty$ and $w\in A_p$. Then there exists a constant $C_p>0$ such that 
$\|M\|_{L^p(w)\rightarrow L^p(w)}\leq C_p[w]_{A_p}^{\frac{1}{p-1}}$.
\end{theorem}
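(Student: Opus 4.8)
The plan is to follow Buckley's scheme~\cite{Buck}: reduce the estimate to a sparse (dyadic) bound, and then extract the sharp power of $[w]_{A_p}$ through a mixed $A_p$--$A_\infty$ estimate. First I would replace $M$ by a maximal operator built from a nested, ``dyadic'' family of base sets. In the local field case this is immediate: by Proposition~\ref{p.balls} any two balls are either disjoint or one contains the other, so the sets $\P^k$ and their cosets already form a dyadic grid. In the general covering-family setting the base sets $x+U_i$ are only translates and need not be nested, so one must first use an Edwards--Gaudry-type covering lemma~\cite{EG} to pass from $M$ to a bounded number of genuinely dyadic maximal operators adapted to $\{U_i\}$; the structural data $D$ and $\theta$ enter here, but only through a factor depending on $p$ and the covering family, not on $w$.

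Once the problem is dyadic, I would linearize $M$ by a stopping-time (principal base set) argument, producing a sparse family $\mathcal{S}$ of base sets with pairwise disjoint major subsets $E_Q\subset Q$ satisfying $\mu(E_Q)\geq\tfrac12\mu(Q)$, for which
\[
\int_G (Mf)^p\,w\,d\mu\leq C_p\sum_{Q\in\mathcal{S}}\Bigl(\frac{1}{\mu(Q)}\int_Q|f|\,d\mu\Bigr)^p w(Q).
\]
Writing $\sigma=w^{1-p'}$ and $\langle g\rangle^\sigma_Q=\frac{1}{\sigma(Q)}\int_Q g\,\sigma\,d\mu$, the identity $\frac{1}{\mu(Q)}\int_Q|f|\,d\mu=\frac{\sigma(Q)}{\mu(Q)}\langle|f|\sigma^{-1}\rangle^\sigma_Q$ together with the $A_p$ inequality $\frac{w(Q)}{\mu(Q)}\bigl(\frac{\sigma(Q)}{\mu(Q)}\bigr)^{p-1}\leq[w]_{A_p}$ reduces the right-hand side to
\[
[w]_{A_p}\sum_{Q\in\mathcal{S}}\bigl(\langle|f|\sigma^{-1}\rangle^\sigma_Q\bigr)^p\sigma(Q).
\]

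The heart of the matter is then a quantitative Carleson embedding for the $\sigma$-averages. The family $\mathcal{S}$ is sparse with respect to Haar measure, and I would upgrade this to a $\sigma$-Carleson packing condition $\sum_{Q'\in\mathcal{S},\,Q'\subseteq Q}\sigma(Q')\leq\Lambda\,\sigma(Q)$; this is precisely where the $A_\infty$ comparison of Corollary~\ref{cor2} and the reverse H\"older inequality of Theorem~\ref{rhi} are used, and they give $\Lambda\leq C[\sigma]_{A_\infty}$. The sharp Carleson embedding then bounds the last sum by $C_p[\sigma]_{A_\infty}\,\|f\sigma^{-1}\|_{L^p(\sigma)}^p$, and since $\sigma^{1-p}=w$ this norm equals $\|f\|_{L^p(w)}^p$. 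Collecting constants yields the mixed bound $\|M\|_{L^p(w)\to L^p(w)}\leq C_p([w]_{A_p}[\sigma]_{A_\infty})^{1/p}$. Finally, the duality $w\in A_p\Leftrightarrow\sigma\in A_{p'}$ with $[\sigma]_{A_{p'}}=[w]_{A_p}^{1/(p-1)}$, combined with $[\sigma]_{A_\infty}\leq[\sigma]_{A_{p'}}$, turns this into $C_p[w]_{A_p}^{(1+\frac{1}{p-1})/p}=C_p[w]_{A_p}^{1/(p-1)}$, as claimed.

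I expect the main obstacle to be the sparse/dyadic reduction in the genuine covering-family setting, together with the sharp control of the Carleson constant. Unlike local fields, a general covering family is not nested, so constructing adapted dyadic grids and checking that the structural constants $D$ and $\theta$ enter independently of $w$ is delicate. Equally delicate is passing from Haar-sparseness to the $\sigma$-Carleson condition with constant controlled by $[\sigma]_{A_\infty}$: a cruder estimate here would only yield the non-sharp exponent $1$, and it is the quantitative reverse H\"older inequality, applied carefully, that produces the sharp power $1/(p-1)$.
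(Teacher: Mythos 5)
The first thing to note is that the paper does not prove this theorem at all: it is quoted from Paternostro--Rela \cite{PR}, and the paper's only contribution is the observation, made right after the statement, that local fields fit the covering-family framework (take $U_i=\P^i$, $\theta=\mathrm{id}$, $D=1$, so that by Proposition~\ref{p.balls} the base sets are genuinely nested). So your proposal is not an alternative to an argument in the paper; it is an attempt to reconstruct the cited black box. That said, the skeleton you propose is the right one, and it is essentially how the sharp bound is obtained in the modern literature and in \cite{PR} itself (which follows the Hyt\"onen--P\'erez mixed $A_p$--$A_\infty$ scheme): sparse domination of $M$, factorization through $\sigma=w^{1-p'}$ using the $A_p$ condition, a $\sigma$-Carleson embedding, and the duality arithmetic $[\sigma]_{A_{p'}}=[w]_{A_p}^{1/(p-1)}$ with $(1+\tfrac{1}{p-1})/p=\tfrac{1}{p-1}$, which you carry out correctly.

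Two caveats, one of which is a real gap as written. The tools of this paper that you invoke for the Carleson packing constant, Theorem~\ref{rhi} and Corollary~\ref{cor2}, are purely qualitative: their constants depend on the $A_p$ characteristic in an unspecified way, so they cannot yield $\Lambda\leq C[\sigma]_{A_\infty}$ with $C$ absolute, and plugging them in would destroy exactly the sharpness you are after. What the argument actually needs is either the Fujii--Wilson definition $[\sigma]_{A_\infty}=\sup_B\frac{1}{\sigma(B)}\int_B M(\sigma\mathbf{1}_B)\,d\mu$, with which the packing bound for a sparse family is a short computation using the disjoint major sets $E_Q$ (no reverse H\"older inequality needed), together with the nontrivial comparison $[\sigma]_{A_\infty}\leq C[\sigma]_{A_{p'}}$; or else the \emph{sharp} quantitative reverse H\"older inequality of \cite{Hyto}, whose covering-family version is itself one of the main results of \cite{PR}. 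You half-acknowledge this at the end, but the step cannot be sourced from the results proved in this paper. Second, the sparse/dyadic reduction for a general covering family, which you rightly flag as the main obstacle, is the bulk of the work in \cite{PR}: the base sets $x+U_i$ are not nested, and the constants $D,\theta$ must be threaded through a Calder\'on--Zygmund-type stopping argument. For the only case the paper actually uses, namely local fields (Theorem~\ref{max}), this difficulty evaporates by Proposition~\ref{p.balls} and your plan closes into a complete self-contained proof; for the general statement as quoted, your two load-bearing steps are precisely the content of the cited paper rather than consequences of anything established here.
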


In the present case of local fields, we take $U_i=\P^i$, $i\in\Z$, $\theta$ to be the identity function on $\Z$ and $\mu$ is the Haar measure on $K$. Then, it is easy to see that $\{{\P^i}:i\in\Z\}$ is a covering family of $K$. Hence, Theorem~\ref{lca} can be applied to local fields to obtain the boundedness of the Hardy-Littlewood maximal operator. In~\cite{CH}, the authors assumed that the weights $w$ satisfy a doubling condition. By Theorem~\ref{lca}, we observe that this assumption is not needed.

In the Euclidean spaces $\R^n$, Buckley~\cite{Buck} proved that $\|M\|_{L^p(w)\rightarrow L^p(w)}\leq C_p [w]_{A_p}^{\frac{1}{p-1}}$ and that the power $[w]_{A_p}^{\frac{1}{p-1}}$ is best possible. We will show that an analogous result also holds in local fields. First, we construct weights on $K$ satisfying the doubling condition but not in $A_p$.

\subsection{Examples of doubling weights} 
Let $w(x)=|x|^\alpha$. We will show that $w$ is a doubling weight if $\alpha>-1$. We first compute the integral $\int_{\P^k}|x|^\alpha\,dx$. We have
\begin{eqnarray}
w(\P^k)=\int_{\P^k}|x|^\alpha \,dx 
& = & \sum_{l=-\infty}^{-k}\int_{\{x:|x|=q^l\}}|x|^\alpha \,dx \nonumber\\
& = & \sum_{l=-\infty}^{-k}q^{\alpha l}|\{x:|x|=q^l\}| \nonumber \\
& = & \sum_{l=-\infty}^{-k}q^{\alpha l}(q^l-q^{l-1}) \nonumber \\
& = & \frac{(q-1)q^{\alpha -k(\alpha +1)}}{q^{\alpha +1}-1}. \label{e.wofpk}
\end{eqnarray}
Note that $\alpha>-1$ is necessary for the series in the above computation to converge which guarantees the integrability of $w$. Now, let $B$ be any ball. Then $B$ is of the form $x_0+\P^k$ for some $x_0\in K$ and $k\in\Z$. We consider the cases (i)~$x_0\in\P^{k-1}$ and (ii)~$x_0\not\in\P^{k-1}$. 

(i)~If $x_0\in\P^{k-1}$, then $x_0+\P^{k-1}=\P^{k-1}$. Therefore, by~\eqref{e.wofpk}, we have
\[
w(x_0+\P^{k-1})=w(\P^{k-1})=\frac{(q-1)q^{\alpha-k(\alpha+1)}}{q^{\alpha+1}-1}q^{\alpha +1}.
\]
Now, we compute $w(x_0+\P^{k})$. If $x_0$ is also in $\P^k$, then $x_0+\P^{k}=\P^k$ so that
\begin{equation}\label{eq}
w(x_0+\P^k)=w(\P^k)=\frac{(q-1)q^{\alpha-k(\alpha+1)}}{q^{\alpha+1}-1},
\end{equation}
and we get $w(x_0+\P^{k-1})=q^{\alpha+1}w(x_0+\P^k)$. If $x_0\in\P^{k-1}\setminus\P^k$, then for any $x\in x_0+\P^k$, $|x|=|x_0|$. Hence, $\int_{x_0+\P^k}|x|^\alpha\,dx=q^{-k-k\alpha+\alpha}$. Thus, we get $w(x_0+\P^{k-1})=\frac{(q-1)q^{\alpha+1}}{q^{\alpha+1}-1}w(x_0+\P^k)$. 

(ii)~If $x_0\notin\P^{k-1}$, then $x_0\notin\P^{k}$. Therefore,  for any $x\in x_0+\P^{k-1}$ or $x\in x_0+\P^{k}$, $|x|=|x_0|$. Hence, $w(x_0+\P^k)=|x_0|^\alpha q^{-k}$ and $w(x_0+\P^{k-1})=|x_0|^\alpha q^{-(k-1)}$ so that $w(x_0+\P^{k-1})=q\cdot w(x_0+\P^k)$.

Hence, $|x|^\alpha$ is a doubling weight for $\alpha>-1$.

\subsection{Examples of doubling weights which are not $A_p$ weights}
Here we see that $|x|^\alpha$ is an $A_p$ weight if and only if $-1<\alpha<p-1$. As above, for any ball $x_0+\P^k$ we consider two cases: $x_0\in\P^k$ and $x_0\not\in\P^k$. 

In the first case, by~\eqref{eq}, we have
\begin{eqnarray*}
\lefteqn{
\Bigl(\frac{1}{|\P^k|}\int_{x_0+\P^k}|x|^\alpha\,dx\Bigr)\Bigl(\frac{1}{|\P^k|}\int_{x_0+\P^k}|x|^{-\frac{\alpha}{p-1}}\,dx\Bigr)^{p-1}
} \\
&  & \qquad\qquad\qquad =\frac{(q-1)^p}{q^{\alpha+1}-1}\cdot\Bigl(q^{-\frac{\alpha}{p-1}+1}-1\Bigr)^{1-p}.
\end{eqnarray*}
Note that $-\frac{\alpha}{p-1}+1>0$ since $-1<\alpha<p-1$. In the second case, it is easy to see that 
\[
\Bigl(\frac{1}{|\P^k|}\int_{x_0+\P^k}|x|^\alpha\,dx\Bigr)\Bigl(\frac{1}{|\P^k|}\int_{x_0+\P^k}|x|^{-\frac{\alpha}{p-1}}\,dx\Bigr)^{p-1}=1.
\]
Hence, it follows that for $-1<\alpha<p-1$, $|x|^\alpha$ is an $A_p$ weight. In particular, for $\alpha=(p-1)(1-\theta)$, $0<\theta<1$, we have $[w]_{A_p}\sim\frac{1}{(q^{\theta}-1)^{p-1}}$ since $\frac{1}{q^p}\leq\frac{1}{q^{\alpha+1}-1}\leq 1$.

\begin{remark}
Note that these power weights are similar in structure to such weights in $\R^n$. Our proofs in the above examples are different. Moreover, in contrast to the Euclidean cases, we are getting equality of the form $w(x_0+\P^{k-1})=Cw(x_0+\P^k)$. 
\end{remark}

Now, we show that $[w]_{A_p}^{\frac{1}{p-1}}$ is best possible in the sense of Buckley. Let $w(x)=|x|^{(p-1)(1-\theta)}$, $0<\theta<1$. Then, as we noted above, $[w]_{A_p}\sim\frac{1}{(q^{\theta}-1)^{p-1}}$. Put $f(x)=|x|^{\theta-1}{\mathbf 1}_{\D}$. Let $x_0\in\D$. Then, $|x_0|=q^{-k}$ for some $k\in\N_0$. It is easy to see that $\frac{1}{|\P^k|}\int_{\P^k}|y|^{\theta-1}\,dy\geq (1-q^{-1})\frac{1}{q^{\theta}-1}f(x_0)$. Hence, for all $x\in K$, $Mf(x)\geq (1-q^{-1})\frac{1}{q^{\theta}-1}f(x)$. Therefore,
\[
\|Mf\|_{L^p(w)}\geq(1-q^{-1})\frac{1}{q^{\theta}-1}\|f\| _{L^p(w)}\sim[w]_{A_p}^{\frac{1}{p-1}}\|f\| _{L^p(w)}.
\]
Thus, we obtain the following result on the Hardy-Littlewood maximal operator on local fields.	

\begin{theorem}\label{max}
Let $M$ be the Hardy-Littlewood maximal operator on a local field $K$. If $w\in A_p$, then $M$ is bounded on $L^p(w)$ and $\|M\|_{L^p(w)\rightarrow L^p(w)}\sim[w]_{A_p}^\frac{1}{p-1}$. Moreover, the power $[w]_{A_p}^\frac{1}{p-1}$ is best possible.
\end{theorem}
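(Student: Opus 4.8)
The plan is to establish the two-sided estimate $\|M\|_{L^p(w)\to L^p(w)}\sim[w]_{A_p}^{1/(p-1)}$ by proving the upper and lower bounds separately; the upper bound gives boundedness of $M$ on $L^p(w)$, while the lower bound, exhibited through a suitable family of weights, shows that the exponent $1/(p-1)$ cannot be improved.

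For the upper bound I would first verify that a local field $K$ satisfies the hypotheses of Theorem~\ref{lca}. Take $U_i=\P^i$ for $i\in\Z$, let $\theta$ be the identity map on $\Z$, and let $\mu$ be the normalized Haar measure. Then $\{\P^i:i\in\Z\}$ is an increasing base of compact open neighbourhoods of $0$ satisfying $\bigcup_{i}\P^i=K$ and $\bigcap_{i}\P^i=\{0\}$. Condition (a) is immediate; condition (b) holds because each $\P^i$ is a subgroup, so $\P^i-\P^i=\P^i=U_{\theta(i)}$; and condition (c) holds trivially with $D=1$ since $\theta$ is the identity. The associated base sets $x+\P^i$ are precisely the balls of $K$, so the maximal operator of Edwards and Gaudry coincides with $M$. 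Theorem~\ref{lca} therefore applies and yields a constant $C_p>0$ with $\|M\|_{L^p(w)\to L^p(w)}\leq C_p[w]_{A_p}^{1/(p-1)}$ for every $w\in A_p$; in particular $M$ is bounded on $L^p(w)$.

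For the matching lower bound I would use the one-parameter family of power weights computed above. Fix $0<\theta<1$ and set $w(x)=|x|^{(p-1)(1-\theta)}$, so that $w\in A_p$ and, as noted, $[w]_{A_p}\sim(q^\theta-1)^{-(p-1)}$. Choosing the test function $f(x)=|x|^{\theta-1}{\mathbf 1}_\D$, one evaluates $\int_{\P^k}|y|^{\theta-1}\,dy$ via the layer-cake computation in~\eqref{e.wofpk} and finds, for $x_0\in\D$ with $|x_0|=q^{-k}$, that the single ball $\P^k$ already produces the pointwise estimate $Mf(x)\geq(1-q^{-1})(q^\theta-1)^{-1}f(x)$ valid for all $x\in K$. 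Taking $L^p(w)$ norms gives $\|Mf\|_{L^p(w)}\geq(1-q^{-1})(q^\theta-1)^{-1}\|f\|_{L^p(w)}$, and since $(q^\theta-1)^{-1}\sim[w]_{A_p}^{1/(p-1)}$ this is a lower bound of the required order. Letting $\theta\to0^{+}$ forces $[w]_{A_p}\to\infty$, so no power of $[w]_{A_p}$ smaller than $1/(p-1)$ can bound the operator norm; hence the exponent is best possible and the theorem follows.

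The conceptual input is largely imported: the upper bound is Theorem~\ref{lca} once the covering-family axioms are verified, and the sharpness rests on the explicit power-weight calculation. The step demanding the most care is the sharpness argument, where one must confirm both the asymptotic $[w]_{A_p}\sim(q^\theta-1)^{-(p-1)}$ and the pointwise lower bound $Mf(x)\geq(1-q^{-1})(q^\theta-1)^{-1}f(x)$. Both hinge on the exact evaluation of $w(\P^k)$ in~\eqref{e.wofpk} together with the nested structure of balls in a local field (Proposition~\ref{p.balls}), which is what allows a single ball to realize the decisive average.
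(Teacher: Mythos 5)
Your proposal is correct and follows essentially the same route as the paper: the upper bound by verifying the covering-family axioms for $\{\P^i\}$ (with $\theta$ the identity and $D=1$) and invoking Theorem~\ref{lca}, and sharpness via the power weights $w(x)=|x|^{(p-1)(1-\theta)}$ with test function $f(x)=|x|^{\theta-1}{\mathbf 1}_{\D}$, using the exact evaluation of $w(\P^k)$ and the pointwise bound $Mf\geq(1-q^{-1})(q^{\theta}-1)^{-1}f$. The only difference is that you spell out the final limiting argument as $\theta\to 0^{+}$, which the paper leaves implicit.
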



\section{Proof of Theorem~\ref{main}}\label{s.proof}
In this section we prove our first main result. Recall that the Dirichlet kernels $D_n$, $n\geq 0$, are defined by
\[
D_n(x)=\sum_{k=0}^{n-1}\chi_k(x),\quad n\geq 1,\quad D_0\equiv 0.
\]
Let $\Phi_0$ be the characteristic function of $\D$. For functions on $\D$, we treat them as functions defined on $K$ but supported on $\D$. Then, by this convention, we have $D_n=\Phi_0 D_n$ and $S_{n}f=f*D_n$, where the integration that defines the convolution is over all of $K$. Define
\[
\widetilde{D_n}=\overline{\chi}_nD_n\quad\mbox{and}\quad T_{n}f=\widetilde{D_n}*f. 
\]
The functions $\widetilde{D_n}$, $n\geq 0$, are called the \emph{modified Dirichlet kernels}. If $x\in\D$, then
\begin{eqnarray*}
S_n f(x)
& = & \int_{\D}f(y)D_n(x-y)\,dy \\
& = & \chi_n(x)\int_{\D}f(y)\overline{\chi}_n(y)\overline{\chi}_n(x-y)D_n(x-y)\,dy \\
& = & \chi_n(x)\int_{\D}\bigl(\overline{\chi}_nf\bigr)(y) \widetilde{D_n}(x-y)\,dy.
\end{eqnarray*}
Therefore,
\begin{equation}\label{bridge}
S_{n}f=\chi_{n}\bigl(T_{n}(\overline{\chi}_nf)\bigr). 
\end{equation}
By our convention, $\widetilde{D_n}=\Phi_0\widetilde{D_n}$ is the kernel of the operator $T_n$. We denote this kernel by $K_n$. 

\noindent\emph{Proof of {\rm(a)} implies {\rm(b)}}. Let $1<p<\infty$, $w\in A_p$, and $f\in L^p(\D,w)$. Then $f\in L^1(\D)$ since
\begin{eqnarray*}
\int_{\D}|f(x)|\,dx 
& \leq & \Bigl(\int_{\D}|f(x)|^p w(x)\,dx\Bigl)^\frac{1}{p} \Bigl(\int_{\D} w(x)^{-\frac{1}{p-1}}\,dx\Bigr)^\frac{1}{p'} \\
& \leq & \|f\|_{L^p(\D,w)}[w]_{A_p}^\frac{1}{p}\Bigl(\frac{1}{w(\D)}\Bigr)^\frac{1}{p} <\infty,
\end{eqnarray*}
by H\"older's inequality and the fact that $w\in A_p$. Hence, we can define the Fourier coefficients of $f$ so that $S_{n}f$ makes sense. By~\eqref{bridge}, in order to prove~(b), it is enough to prove that 
\begin{equation}\label{e.weightTn}
\int_{\D}|T_{n}f(x)|^p w(x)\,dx\leq C\int_{\D}|f(x)|^pw(x)\,dx,\quad n=0,1,2,\dots,
\end{equation}
and the constant $C$ is independent of $n$. 

We will first prove~\eqref{e.weightTn} for functions in $C(\D)$, the space of continuous functions on $\D$. Then, we will extend this inequality for a general function in $L^p(\D,w)$ by a limiting argument.

Let $f$ be a function in $C(\D)$. We note that $T_nf$ is supported in $\D$. In~\cite{Taib1}, Taibleson proved that the operators $T_n$, $n\in\N_0$, are uniformly bounded on $ L^p(\D)$. That is, there is a constant $C_p>0$, independent of $n$, such that
\begin{equation}\label{e.taibleson}
\|T_{n}f\|_{L^p(\D)}\leq C_{p}\|f\|_{L^p(\D)}\quad\mbox{for all}~f\in L^p(\D).
\end{equation}
Using this result and the reverse H\"older inequality for $A_p$ weights, it can be seen that $T_{n}f \in L^p(\D,w)$. Indeed, we take the $\epsilon$ obtained from Theorem~\ref{rhi} and let $q'=1+\epsilon$. Then we have
\begin{eqnarray*}
\int_{\D}|T_n f(x)|^p w(x)\,dx
& \leq & \Bigl(\int_{\D}|T_n f(x)|^{pq}\,dx\Bigr)^{\frac{1}{pq}\cdot p}\Bigl(\int_{\D}w(x)^{q'}\,dx\Bigr)^{\frac{1}{q'}} \\
& \leq & C_{pq}\|f\|_{L^{pq}(\D)}^p\cdot C\int_{\D}w(x)\,dx<\infty.
\end{eqnarray*}
Hence $T_{n}f\in L^p(w)$, $1<p<\infty$. Here we have used the fact that $f\in C(\D)$ so that $f\in L^p(\D)$ for all $p$. From Theorem~\ref{max}, it follows that $M(T_{n}f)\in L^p(w)$. Using Lebesgue differentiaton theorem (see Theorem~1.14, Chapter~II in~\cite{Taib}) and applying Theorem~\ref{T.sharp}, we get
\begin{eqnarray}
\int_{\D}|T_n f(x)|^p w(x)\,dx\nonumber 
& \leq & \int_{K}M\bigl(T_n f(x)\bigl)^p w(x)\,dx\nonumber \\
& \leq & C_{p,w}\int_{K}\bigl((T_n f)^\sharp(x)\bigr)^p w(x)\,dx.
\end{eqnarray}

At this step we need the following pointwise relation between $(T_nf)^\sharp$ and $M_s f$. Recall that for $1<s<\infty$, $M_s f(x)=\bigl(M|f|^s(x)\bigr)^\frac{1}{s}$.  

\begin{lemma}\label{l.tnmr}
Let $1<s<\infty$. Then there is a constant $C_s>0$ such that for any $f\in C(\D)$ and $n=1, 2, \dots$, we have
\begin{equation}\label{ptwisel}
(T_nf)^\sharp(x)\leq C_s M_s f(x)\quad\mbox{for a.e.}\,x.
\end{equation}
\end{lemma}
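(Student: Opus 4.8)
\emph{Proof sketch.} The plan is to prove \eqref{ptwisel} by the standard Calder\'on--Zygmund estimate for the sharp maximal function, adapted to the ultrametric geometry of $K$ and made \emph{uniform in $n$} by combining Taibleson's uniform bound \eqref{e.taibleson} with a regularity estimate on the modified kernels $K_n=\Phi_0\widetilde{D_n}$. Fix $x\in K$ and a ball $B\ni x$. Since every point of a ball is its centre (Proposition~\ref{p.balls}(a)), I may write $B=x+\P^k$, and I set $B^*=x+\P^{k-1}$, so that $|B^*|=q\,|B|$. Because $(T_nf)^\sharp(x)$ is comparable to the supremum over balls $B\ni x$ of the infimum over constants $c$ of $\frac{1}{|B|}\int_B|T_nf(y)-c|\,dy$, it suffices to produce, for each such $B$, a constant $c_B$ with
\[
\frac{1}{|B|}\int_B|T_nf(y)-c_B|\,dy\le C_s\,M_sf(x),
\]
the constant being independent of $B$ and $n$. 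I split $f=g+h$ with $g=f\mathbf{1}_{B^*}$ and $h=f\mathbf{1}_{K\setminus B^*}$, and choose $c_B=(T_nh)(x)$. (If $B^*\supseteq\D$ then $h=0$ because $f$ is supported in $\D$, and only the local estimate below is needed; so such large balls are handled automatically.)

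For the \emph{local part}, I note first that since $\D$ is an additive subgroup, $g$ supported in $B^*\cap\D$ forces $T_ng$ to be supported in $\D$, and \eqref{e.taibleson}, valid on all of $L^s(\D)$, gives $\|T_ng\|_{L^s}\le C_s\|g\|_{L^s}$. Applying H\"older's inequality and then this bound,
\[
\frac{1}{|B|}\int_B|T_ng(y)|\,dy\le\Big(\frac{1}{|B|}\int_K|T_ng|^s\Big)^{1/s}\le C_s\Big(\frac{|B^*|}{|B|}\Big)^{1/s}\Big(\frac{1}{|B^*|}\int_{B^*}|f|^s\Big)^{1/s}\le C_s\,q^{1/s}\,M_sf(x).
\]
For the \emph{global part}, I write $(T_nh)(y)-(T_nh)(x)=\int_{K\setminus B^*}\bigl(K_n(y-z)-K_n(x-z)\bigr)f(z)\,dz$. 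For $y\in B$ one has $y-x\in\P^k$ while $z-x\notin\P^{k-1}$, so $|y-z|=|x-z|$ by the ultrametric identity \eqref{e.max}; this is the feature that will make the kernel differences small. Decomposing $K\setminus B^*$ into the annuli $\{z:|z-x|=q^{1-k+j}\}$, $j\ge0$, the global term reduces to a uniform Hörmander-type estimate $\sup_{y\in B}\int_{K\setminus B^*}|K_n(y-z)-K_n(x-z)|\,dz\le C$ together with the annular size bound for $K_n$; summing the resulting geometric series over $j$ yields $\frac{1}{|B|}\int_B|T_nh(y)-(T_nh)(x)|\,dy\le C\,Mf(x)\le C\,M_sf(x)$. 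Combining the two parts and taking the supremum over $B\ni x$ gives \eqref{ptwisel}.

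The hard part will be the regularity estimate on the modified Dirichlet kernels $\widetilde{D_n}$ \emph{uniformly in $n$}. In contrast to the circle and Vilenkin groups, where the Dirichlet kernel is a closed-form quotient of sines, here $D_n$ admits a simple description only for $n=q^m$, namely $D_{q^m}=q^m\Phi_m$, while for general $n$ no comparable formula is available. My plan for this step is to exploit the base-$q$ expansion $n=\sum_{i=0}^s b_iq^i$ together with the additivity relation \eqref{eq.un} for $u(n)$ to write $D_n$ as a controlled sum of terms built from the elementary kernels $D_{q^m}$, and then to estimate each kernel difference $K_n(y-z)-K_n(x-z)$ annulus by annulus, using the ultrametric inequality to exploit the cancellation coming from the factor $\overline{\chi}_n$ and the local constancy of the $\Phi_m$. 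This decomposition, and the verification that the resulting bounds sum to a constant independent of $n$ and of the scale $k$, is where the techniques from the theory of regular singular integral operators (Chapter~IV of~\cite{GR}) enter, and it constitutes the technical core of the argument.
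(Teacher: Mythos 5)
Your overall architecture matches the paper's: the reduction of $(T_nf)^\sharp$ to a single ball via an $\inf_\alpha$ estimate, the split of $f$ into a local piece and a far piece, the choice $c_B=T_nh(x)$ (the paper's $I_l(x)$ is the same constant), and the treatment of the local part by Taibleson's uniform bound~\eqref{e.taibleson} plus H\"older are all exactly what the paper does. However, your proof has a genuine gap at precisely the point you yourself flag as ``the technical core'': the regularity of the kernels $K_n$ uniformly in $n$ is never proved, only planned. Everything else in the argument is routine Calder\'on--Zygmund bookkeeping; the entire mathematical content of the lemma lives in that missing step, so the proposal as written does not establish the statement.

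Moreover, the plan you sketch aims at the wrong target and would make the problem harder than it is. You propose a H\"ormander-type bound $\sup_{y\in B}\int_{K\setminus B^*}|K_n(y-z)-K_n(x-z)|\,dz\le C$ with geometric decay over annuli, to be extracted from a digit-by-digit decomposition of $D_n$ into the elementary kernels $D_{q^m}$. What is actually true --- and what the paper proves --- is much stronger: the kernel difference vanishes \emph{identically}, i.e.\ $K_n(y-z)=K_n(x-z)$ whenever $|y-x|<|x-z|$, so the far term is exactly zero and no summation over annuli is needed. The paper obtains this from the local constancy property $K_n(u+v)=K_n(u)$ for $|v|<|u|$, which is proved on the Fourier side: $\widehat{K_n}=\sum_{m=0}^{n-1}\tau_{u(m)-u(n)}\Phi_0$ is the indicator of a union of $n$ cosets of $\D$, this union is shown to be locally constant in the required sense by induction on the base-$q$ digits of $n$ using~\eqref{eq.un} (Proposition~\ref{p.kernelKn}\,(b)), and Proposition~\ref{p.constcosets} shows that the Fourier transform preserves the class of functions in $\mathcal S$ with this local constancy property, so the property transfers from $\widehat{K_n}$ back to $K_n$. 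Your pointwise size bound $|K_n(x)|\le q/|x|$ and the ultrametric identity~\eqref{e.max} are the right ingredients for the setup, but without the Fourier-side coset argument (or some substitute proof of the exact local constancy, or at least of a summable H\"ormander condition uniform in $n$), the estimate you need remains unestablished; to complete your proof you should supply it, and the cleanest way is the paper's route through $\widehat{K_n}$.
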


We postpone the proof of this lemma and continue with the proof of (a) implies (b). Once Lemma~\ref{l.tnmr} is proved,~\eqref{e.weightTn} will follow easily. Indeed, we have 
\begin{eqnarray}
\int_{\D}|T_n f(x)|^p w(x)\,dx
& \leq & C_{p,w} \int_{K}\bigl((T_n f)^\sharp(x)\bigr)^p w(x)\,dx \nonumber \\
& \leq & C_{p,w}{C_s}^p\int_{K}{M_s f(x)}^p w(x)\,dx \nonumber \\
& = & C_{p,w} {C_s}^p \int_{K}(M|f|^s(x))^\frac{p}{s}w(x)\,dx.
\end{eqnarray}
Now, from Corollary~\ref{cor2}, we obtain an $r$ with $1<r<p$ such that $w\in A_r$. We choose $s=\frac{p}{r}$. Then $\frac{p}{s}=r>1$ and $w\in A_{\frac{p}{s}}$. Since $|f|^s\in L^{\frac{p}{s}}(w)$, we apply Theorem~\ref{lca} and get
\[
\int_{\D}|T_n f(x)|^p w(x)\,dx 
\leq C_{p,w} {C_s}^p C_p \int_{\D}{|f(x)|}^p w(x)\,dx.
\]
We also note that the constant in the above inequality is independent of $n$. Hence, accepting the validity of  Lemma~\ref{l.tnmr}, we have proved~\eqref{e.weightTn} for all $f\in C(\D)$. 

Let $1<p<\infty$. Since $d\mu(x)=w(x)\,dx$ is a regular measure, $C_c(K)$ is dense in $L^p(w)$ so that $C(\D)$ is dense in $L^p(\D, w)$. Now, we extend $T_n$ from $C(\D)$ to $L^p(\D,w)$ by continuity and call this operator $\tilde T_n$. For $f\in L^p(\D, w)$, choose a sequence $\{f_l\}$ in $C(\D)$ converging to $f$. Hence, $T_nf_l\rightarrow\tilde{T_n}f$ in $L^p(\D, w)$ as $l\rightarrow\infty$. Also, since $T_n$ is the convolution operator with kernel $K_n$, it follows that $T_nf_l$ also converges to $T_nf$ in $L^p(\D,w)$ by Young's inequality. This show that $\tilde{T_n}$ coincides with $T_n$. 

Therefore, to complete the proof of (a) implies (b), it remains to show that Lemma~\ref{l.tnmr} is valid. To prove this lemma we need the following result. Recall that $\mathcal{S}$ is the space of all finite linear combinations of functions of the form $\tau_h\Phi_k$, $h\in K$, $k\in\Z$. 

\begin{proposition}\label{p.constcosets}
Let $\varphi$ be a function in $\mathcal{S}$. If $\varphi$ is constant on cosets of $\P^{k+1}$ in $\P^k\setminus\P^{k+1}$ for all $k\in\Z$, then $\hat\varphi$ is constant on cosets of $\P^{k+1}$ in $\P^k\setminus\P^{k+1}$ for all $k\in\Z$.
\end{proposition}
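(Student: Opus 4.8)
The plan is to decompose $\varphi$ into its ``annular'' pieces and apply Proposition~\ref{supp} to each piece separately. For $k\in\Z$, set $\varphi_k=\varphi\cdot(\Phi_k-\Phi_{k+1})$, the restriction of $\varphi$ to the annulus $\P^k\setminus\P^{k+1}=\{x:|x|=q^{-k}\}$. Since $\varphi\in\mathcal{S}$ is compactly supported and locally constant, only finitely many $\varphi_k$ are nonzero and $\varphi=\sum_k\varphi_k$. Each $\varphi_k$ again lies in $\mathcal{S}$ (the space $\mathcal{S}$ is closed under multiplication, since a product of indicators of balls is the indicator of their intersection, which by Proposition~\ref{p.balls} is again a ball or empty). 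Moreover $\varphi_k$ is supported on $\P^k$ and is constant on every coset of $\P^{k+1}$: on the cosets contained in $\P^k\setminus\P^{k+1}$ it agrees with $\varphi$, which is constant there by hypothesis, and it vanishes on the remaining cosets.

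Next I would apply Proposition~\ref{supp} to each $\varphi_k$. Taking $g=\varphi_k$, which is constant on cosets of $\P^{k+1}$ and supported on $\P^k$, the proposition yields that $\hat\varphi_k$ is constant on cosets of $\P^{-k}$ and supported on $\P^{-k-1}$. By linearity of the Fourier transform, $\hat\varphi=\sum_k\hat\varphi_k$, again a finite sum.

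It then remains to verify the conclusion annulus by annulus. Fix $m\in\Z$ and consider $\P^m\setminus\P^{m+1}$. A term $\hat\varphi_k$ can be nonzero on this annulus only if its support $\P^{-k-1}$ meets $\{|\xi|=q^{-m}\}$, that is, only if $k\ge -m-1$. For $k=-m-1$, Proposition~\ref{supp} gives that $\hat\varphi_k$ is constant on cosets of $\P^{-k}=\P^{m+1}$, hence in particular on $\P^m\setminus\P^{m+1}$ it is constant on cosets of $\P^{m+1}$. For $k\ge -m$ we have $-k\le m$, so $\P^m\setminus\P^{m+1}\subseteq\P^m\subseteq\P^{-k}$ lies inside the single coset $\P^{-k}$; hence $\hat\varphi_k$ is outright constant on the whole annulus, and trivially constant on cosets of $\P^{m+1}$ there. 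Summing the finitely many contributing terms shows that $\hat\varphi$ is constant on cosets of $\P^{m+1}$ in $\P^m\setminus\P^{m+1}$, as required.

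The only delicate point is the index bookkeeping in this last step: one must match which pieces $\hat\varphi_k$ can contribute to a given annulus and check that the support exponent $-k-1$ and the constancy exponent $-k$ produced by Proposition~\ref{supp} line up so that every contributing term is $\P^{m+1}$-coset-constant on $\P^m\setminus\P^{m+1}$. Once these indices are tracked correctly the argument closes with no quantitative estimate, relying only on the linearity of the Fourier transform and the single application of Proposition~\ref{supp} to each annular piece.
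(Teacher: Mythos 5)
Your overall strategy, slicing $\varphi$ into annular pieces, applying Proposition~\ref{supp} to each slice, and recombining annulus by annulus, is exactly the paper's proof (the paper's piece $\varphi_l=\varphi\cdot{\bf 1}_{\P^{-l}\setminus\P^{-l+1}}$ is your $\varphi_{-l}$), and your index bookkeeping is correct throughout. But one of your claims is genuinely false: \emph{``only finitely many $\varphi_k$ are nonzero.''} Compact support does not prevent ${\rm supp}\,\varphi$ from containing a neighbourhood of $0$, and every neighbourhood of $0$ meets infinitely many annuli $\P^k\setminus\P^{k+1}$. Concretely, take $\varphi=\Phi_0={\bf 1}_{\D}\in\mathcal{S}$, which satisfies the hypothesis of the proposition: then $\varphi_k=\Phi_k-\Phi_{k+1}\neq 0$ for every $k\geq 0$. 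In general, whenever $\varphi(0)\neq 0$, local constancy forces $\varphi\equiv\varphi(0)$ on some ball $\P^M$, so \emph{all} pieces $\varphi_k$ with $k\geq M$ are nonzero; your decomposition is finite only when $\varphi$ vanishes near $0$. This matters because two of your steps lean on finiteness: the identity $\hat\varphi=\sum_k\hat\varphi_k$, which you justify by ``linearity'' alone, and the last step ``summing the finitely many contributing terms'' on a fixed annulus, where in fact all $k\geq -m-1$ may contribute.

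The gap is repairable in either of two ways. (i) Do what the paper does implicitly: prove the invariance property for every piece ($\hat\varphi_k(x+y)=\hat\varphi_k(x)$ whenever $|y|<|x|$, which is exactly what your case analysis shows), then note that the partial sums $\sum_{k\leq N'}\varphi_k$ converge to $\varphi$ in $L^1(K)$ by dominated convergence, so their Fourier transforms converge uniformly to $\hat\varphi$; since the invariance property is preserved under pointwise limits, it passes to $\hat\varphi$ even though the sum is infinite. (ii) Alternatively, make your finite decomposition honest: choose $M$ with $\varphi$ constant on cosets of $\P^M$ and write $\varphi=\varphi(0)\Phi_M+\psi$. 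Then $\psi\in\mathcal{S}$ still satisfies the hypothesis, vanishes on $\P^M$, and hence has only finitely many nonzero annular pieces, so your argument applies to $\psi$ verbatim; and $\widehat{\varphi(0)\Phi_M}=\varphi(0)q^{-M}\Phi_{-M}$ satisfies the conclusion trivially, being constant on $\P^{-M}$ and zero off it. With either repair your proof is complete and coincides in substance with the paper's.
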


\begin{proof}
Observe that the property $\varphi$ is constant on cosets of $\P^{k+1}$ in $\P^k\setminus\P^{k+1}$ for all $k\in\Z$ is equivalent to the statement that $\varphi(x+y)=\varphi(x)$ whenever $|x|>|y|$. 

Since $\varphi\in\mathcal{S}$, there exists $N\in\N$ such that supp~$\varphi\subseteq\P^{-N}$. By Proposition~\ref{supp}, $\hat\varphi$ is constant on cosets of $\P^N$. Hence, it is enough to prove the result for $\hat\varphi_l$, where $\varphi_l=\varphi\cdot{\bf 1}_{\P^{-l}\setminus\P^{-l+1}}$ for all $l\in\Z$. Fix $l\in\Z$. Since $\varphi_l$ is supported on $\P^{-l}$ and constant on cosets of $\P^{-l+1}$, by Proposition~\ref{supp}, $\hat\varphi_l$ is supported on $\P^{l-1}$ and constant on cosets of $\P^{l}$. 

If $|x|>q^{-l+1}$, then $\hat\varphi_l(x)=0$. Then for each $y$ with $|y|<|x|$, we have $|x+y|=|x|$ so that $\hat\varphi_l(x+y)=0$. Now, if $|x|\leq q^{-l+1}$ and $|y|<|x|$, then $|y|\leq q^{-l}$. Since $x\in\P^{l-1}$, we have $x\in\P^l+a$ for some $a\in K$. So $\P^l+a=B(x,q^{-l})$, by Proposition~\ref{p.balls}\,(a). Hence, $|x+y-x|=|y|\leq q^{-l}$ so that $x+y\in B(x,q^{-l})=\P^l+a$. This shows that $x+y$ and $x$ are in the same coset of $\P^l$. Hence, $\hat\varphi_l(x+y)=\hat\varphi_l(x)$. 
\end{proof}

We derive some crucial properties of the kernel $K_n$ which will be needed in the proof of Lemma~\ref{l.tnmr}. We provide a brief sketch of the proof of the following proposition and refer to~\cite{MB} for the details. 

\begin{proposition}\label{p.kernelKn}
Let $K_n=\Phi_0\widetilde{D_n}$ be the kernel of the operator $T_n$. Then
\begin{enumerate}
\item[(a)] $|K_{n}(x)|\leq\frac{q}{|x|}$ for all $x\in K^*$,
\item[(b)] $\widehat{K_n}(x+y)=\widehat{K_n}(x)$ if $|y|<|x|$. 
\end{enumerate}
\end{proposition}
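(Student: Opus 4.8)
The plan is to establish the two estimates by exploiting the dyadic block structure of the Dirichlet kernels $D_{q^k}$ and the behaviour of the Fourier transform under Proposition~\ref{p.constcosets}. The starting point is the well-known fact that on $\D$ the partial sums at the indices $n=q^k$ have a closed form: one has $D_{q^k}=q^k\Phi_k=q^k{\bf 1}_{\P^k}$, since $\sum_{j=0}^{q^k-1}\chi_j$ is the sum over a complete set of characters that are trivial precisely on $\P^k$. For a general $n$ with $q$-adic expansion $n=b_0+b_1q+\dots+b_sq^s$, one decomposes $D_n$ into blocks using the additivity relation~\eqref{eq.un}, writing each block as a translate (by a character) of a scaled $\Phi_k$. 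This gives an explicit layered expression for $\widetilde{D_n}=\overline{\chi}_nD_n$ and hence for $K_n=\Phi_0\widetilde{D_n}$.

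For part~(a), I would use this block decomposition to show that on the shell $\{|x|=q^{-j}\}$ (equivalently, on cosets of $\P^{j+1}$ inside $\P^j\setminus\P^{j+1}$), the kernel $K_n(x)$ is, up to unimodular character factors, a partial sum of at most $q$ of the block kernels, each contributing a term of size comparable to $q^{j}=1/|x|$ on that shell while the finer blocks cancel by character orthogonality. The ultrametric inequality~\eqref{e.max} is what makes these cancellations clean: on a fixed shell the higher-frequency characters sum to zero over the relevant cosets, leaving a bounded number of surviving terms. Carefully tracking the constant, the worst shell yields the bound $|K_n(x)|\le q/|x|$ uniformly in $n$; the factor $q$ absorbs the at-most-$q$ surviving blocks on each shell.

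For part~(b), the key observation is that $K_n$ is a finite linear combination of translates $\tau_h\Phi_k$, hence $K_n\in\mathcal{S}$, and that it is constant on cosets of $\P^{k+1}$ inside $\P^k\setminus\P^{k+1}$ for every $k$ — this is exactly the shell-wise constancy visible in the block decomposition, and is equivalent to $K_n(x+y)=K_n(x)$ whenever $|x|>|y|$. Proposition~\ref{p.constcosets} then transfers this property verbatim to $\widehat{K_n}$, giving $\widehat{K_n}(x+y)=\widehat{K_n}(x)$ whenever $|y|<|x|$, which is precisely~(b).

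I expect the main obstacle to be part~(a): unlike the indices $n=q^k$, a general $n$ has no simple closed form for $D_n$, so the delicate point is to organize the block decomposition so that the cancellation among the finer-frequency blocks is made rigorous and the number of surviving terms on each shell is bounded by $q$ independently of $n$. The constancy-on-shells claim feeding into part~(b), while conceptually simple, must also be read off carefully from the same decomposition. Since these computations are somewhat involved and appear elsewhere, I would present the decomposition and the two resulting estimates in outline and refer to~\cite{MB} for the full bookkeeping.
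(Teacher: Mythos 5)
Your proposal is correct in outline, but it takes a genuinely different route from the paper in both parts. For (a), the paper avoids the full digit-by-digit decomposition: fixing $x$ with $|x|=q^{-l+1}$, it splits $n=rq^l+t$, $0\leq t<q^l$, into just two pieces via $D_n(x)=D_r(\p^{-l}x)D_{q^l}(x)+\chi_r(\p^{-l}x)D_t(x)$; the first term vanishes because $D_{q^l}=q^l{\bf 1}_{\P^l}$ is zero at $x$, and the second is bounded by the trivial count $|D_t(x)|\leq t<q^l=q/|x|$, so no cancellation beyond the vanishing of $D_{q^l}$ is needed. For (b), the paper works entirely on the Fourier side: it computes $\widehat{K_n}=\sum_{m=0}^{n-1}{\bf 1}_{\D+u(m)-u(n)}$ explicitly as the indicator of a union of $n$ cosets of $\D$ and proves, by induction on the number of $q$-adic digits of $n$, that this union contains $x$ if and only if it contains $x+y$ when $|y|<|x|$; Proposition~\ref{p.constcosets} is invoked only later, in Lemma~\ref{l.tnmr}, to transfer the invariance from $\widehat{K_n}$ back to $K_n$. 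You go in the opposite direction: establish the shell-constancy of $K_n$ itself from the block decomposition and then apply Proposition~\ref{p.constcosets} once. That is legitimate and arguably more economical, since the invariance of $K_n$ --- the fact actually used in Lemma~\ref{l.tnmr} --- is obtained directly rather than by a round trip through the Fourier transform.

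Two points need repair before your sketch is sound. First, in (a) your count is wrong: on the shell $|x|=q^{-j}$ the surviving blocks are those with $k\leq j$, hence $j+1$ of them, which is not bounded by $q$; what saves the estimate is that the $k$-th block is bounded by $b_kq^k\leq(q-1)q^k$, so the surviving blocks form a geometric series summing to at most $q^{j+1}-1<q/|x|$. State it that way, not as ``at most $q$ blocks each of size $1/|x|$.'' Second, in (b) the shell-constancy of $K_n$ is not ``visible'' without a computation, because the character prefactors $\overline{\chi}_n\chi_{u(n_k)}$ of the blocks are not individually trivial. Writing $n_k$ for the part of $n$ above digit $k$ and $m_k=n-n_k<q^{k+1}$, repeated use of \eqref{eq.un} gives $u(n)=u(n_k)+u(m_k)$, so this prefactor equals $\overline{\chi}_{u(m_k)}$ with $|u(m_k)|\leq q^{k+1}$; hence for $|y|\leq q^{-j-1}$ and $k\leq j$ one has $|u(m_k)y|\leq 1$ and the factor is unchanged under $x\mapsto x+y$, and likewise $D_{b_k}(\p^{-k}\cdot)$ and ${\bf 1}_{\P^k}$ are unchanged by the ultrametric inequality. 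With that check included (or delegated to~\cite{MB}, as the paper itself does for its own details), your argument is complete.
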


\begin{proof}
(a) Since $K_n$ is supported on $\D$, we take $x\in\D$ and $x \neq 0$. Then, $|x|=q^{-l+1}$ for some $l\geq 1$. We write $n=rq^l+t$, $0 \leq t<q^l$. It is easy to verify that 
\[
D_n(x)=D_r(\p^{-l}x)\cdot D_{q^l}(x)+\chi_r(\p^{-l}x)D_t(x). 
\]
Since $|x|=q^{-l+1}$, we have $D_{q^l}(x)=0$. Therefore,
\[
|D_n(x)|=|\chi_r(\p^{-l}x)D_t(x)|\leq t<q^l=\frac{q}{|x|}.
\]
Hence, we get $|K_{n}(x)|\leq\frac{q}{|x|}$ for all $x\in K^*$.

(b) We have
\[
K_n=\Phi_0\overline{\chi}_nD_n =\Phi_0\overline{\chi}_n\sum_{m=0}^{n-1}\chi_m 
=\sum_{m=0}^{n-1}\Phi_0\chi(u(m)-u(n)).
\]
Since $(\Phi_0\chi_y)^\wedge=\tau_y\Phi_0$, we get
\[
\widehat{K_n}=\sum_{m=0}^{n-1}\tau_{u(m)-u(n)}\Phi_0=\sum_{m=0}^{n-1}{\bf 1}_{\D+u(m)-u(n)}.
\]
That is, $\widehat{K_n}$ is the characteristic function of the union of $n$ disjoint cosets 
\begin{equation}\label{e.cosets}
\{\D+u(m)-u(n):m=0,1,\dots,n-1\}
\end{equation}
of $\D$. Note that we can write $q^k\leq n\leq q^{k+1}-1$ for some $k\in\N_0$. In order to prove (b), it is equivalent to show that the union of the cosets in~\eqref{e.cosets} either contains both $x$ and $x+y$ or neither. We will prove this by induction on $k$. Observe that if $|y|<|x|$, then $|x+y|=|x|$, by~\eqref{e.max}.

If $k=0$, then $1\leq n\leq q-1$. Hence, $\{\D+u(m)-u(n):m=0,1,\dots,n-1\}$ consists of $n$ distinct cosets of $\D$ in $\P^{-1}$ since $u(m)\not=u(n)$ and $|u(m)-u(n)|=q$ for $m=0,1,\dots,n-1$. Using this fact, we can show that the induction hypothesis is true for $k=0$.

Now, assume that the assertion holds for $n<q^k$. We will prove it for all $n$ such that $q^k\leq n\leq q^{k+1}-1$. We write
\[
n=rq^k+s,\quad 1\leq r\leq q-1,\quad 0\leq s\leq q^k-1.
\]
Then $u(n)=u(rq^k)+u(s)$, by~\eqref{eq.un}. If $0\leq m\leq rq^k-1$, then
\[
m=lq^k+t,\quad 0\leq l\leq r-1,\quad 0\leq t\leq q^k-1
\]
so that $u(m)=u(lq^k)+u(t)$. Hence,
\[
u(m)-u(n)=\Bigl(u(lq^k)-u(rq^k)\Bigr)+\Bigl(u(t)-u(s)\Bigr).
\]
If $rq^k\leq m\leq rq^k+s-1$, then $m=rq^k+\nu$, $0\leq \nu\leq s-1$ and $u(m)-u(n)=u(rq^k)+u(\nu)-u(rq^k)-u(s)=u(\nu)-u(s)$. Therefore, the union of the cosets in~\eqref{e.cosets} is the union of 
\begin{equation}\label{e.cosets1}
\bigcup_{l=0}^{r-1}\bigcup_{t=0}^{q^k-1}\Bigl(\D+u(lq^k)-u(rq^k)+u(t)-u(s)\Bigr)
\end{equation}
and
\begin{equation}\label{e.cosets2}
\bigcup_{\nu=0}^{s-1}\bigl(\D+u(\nu)-u(s)\bigr).
\end{equation}
Since $s<q^k$, the cosets in~\eqref{e.cosets2} satisfy the induction hypothesis. Hence, both $x$ and $x+y$ belong to this union or neither does. For the cosets in~\eqref{e.cosets1}, we observe that $|u(t)-u(s)|\leq q^k$ for $t=0,1,\dots,q^k-1$ so that $\bigcup_{t=0}^{q^k-1}\Bigl(\D+u(t)-u(s)\Bigr)=\P^{-k}$. Hence, the union in~\eqref{e.cosets1} is
\[
\bigcup_{l=0}^{r-1}\Bigl(\P^{-k}+u(lq^k)-u(rq^k)\Bigr).
\]
This is a union of $r$ cosets of $\P^{-k}$ in $\P^{-k-1}\setminus\P^{-k}$. If $x$ is in any of these $r$ cosets, then $|x|=q^{k+1}$ and if $|y|<|x|$, then $|y|\leq q^k$. Then, it follows that $x$ and $x+y$ are in the same coset of $\P^{-k}$ and the induction is complete.  
\end{proof}

We are now ready to prove Lemma~\ref{l.tnmr}.

\begin{proof}[Proof of Lemma~\ref{l.tnmr}]
First we observe that  
\begin{equation}\label{e.fstar}
\|f^\sharp\|_\infty\leq 2\sup_B\inf_{\alpha\in\C}\frac{1}{|B|}\int_B|f(x)-\alpha|\,dx,
\end{equation}
where the supremum is taken over all balls $B$. To see this, we note that for all $\alpha\in\C$,
\[
\int_{B}|f(x)-f_B|\,dx\leq\int_{B}|f(x)- \alpha|\,dx+\int_{B}|\alpha-f_B|\,dx\leq 2\int_{B}|f(x)-\alpha|\,dx.
\]
Now, we divide the both sides by $|B|$, take the infimum over $\alpha\in\C$ and supremum over all balls $B$ to get~\eqref{e.fstar}.

Fix $s>1$, $x\in K$ and any ball $B$ containing $x$. By~\eqref{e.fstar}, it is enough to show that there exists a constant $\alpha$ such that
\begin{equation}\label{e.TnMs}
\frac{1}{|B|}\int_{B}|T_n f(y)- \alpha|\,dy\leq C M_s f(x).
\end{equation}
For $l\in\Z$, consider the integral 
\[
I_l(x)=\int_{|y-x|\geq q^l} K_n(x-y)f(y)\,dy.
\]
This integral exists since $|K_{n}(x)|\leq\frac{q}{|x|}$ for all $x\in K^*$, by Proposition~\ref{p.kernelKn}. 

Now, we decompose $f$ as follows: $f=f_1+f_2$, where $f_1=f\cdot{\bf 1}_{B(x,q^{l-1})}$. Then, by linearity of $T_n$, it follows that
\begin{equation}\label{e.TnIl}
|T_n f(y)-I_l(x)|\leq |T_n f_1(y)|+|T_n f_2(y)-I_l(x)|.
\end{equation}  
Since $s>1$, $\{T_n:n\in\N_0\}$ is uniformly bounded on $L^s(\D)$ (see~\eqref{e.taibleson}). By H\"older's inequality, the average of $|T_n f_1|$ over the ball $B(x, q^{l-1})$ is
\begin{eqnarray}
\frac{1}{q^{l-1}}\int_{|y-x|\leq q^{l-1}}|T_n f_1(y)|\,dy
& \leq & \frac{1}{q^{l-1}}\Bigl( \int_{|y-x|\leq q^{l-1}}|T_n f_1(y)|^s\,dy\Bigr)^\frac{1}{s}\cdot(q^{l-1})^\frac{1}{s'}\nonumber\\
& \leq & \frac{C}{q^{l-1}}\Bigl(\int_{|y-x|\leq q^{l-1}}|f(y)|^s\,dy\Bigr)^\frac{1}{s}\cdot(q^{l-1})^\frac{1}{s'}\nonumber\\
& \leq & C M_sf(x).\label{e.Tnf1}
\end{eqnarray}
The average of the second function on the right in~\eqref{e.TnIl} over the ball $B(x, q^{l-1})$ is dominated by
\begin{equation}\label{e.kernel2}
\int_{|y-x|\leq q^{l-1}}\int_{|z-x|\geq q^l}\Bigl|K_n(y-z)-K_n(x-z)\Bigr||f(z)|\,dz\,dy.
\end{equation}
We will now show that this integral is zero. 

Since $\widehat{K_n}=\sum_{m=0}^{n-1}\tau_{u(m)-u(n)}\Phi_0$, by definition $\widehat{K_n}\in{\mathcal S}$. 
By Proposition~\ref{p.kernelKn}\,(b), we have $\widehat{K_n}(x+y)=\widehat{K_n}(x)$ if $|y|<|x|$. Now, applying Proposition~\ref{p.constcosets} to $\widehat{K_n}$ and observing that $\widehat{\widehat{K_n}}$ is the reflection of $K_n$, we conclude that 
\[
K_n(x+y)=K_n(x)\quad\mbox{whenever}~|y|<|x|.
\]
In~\eqref{e.kernel2}, $|y-x|<|x-z|$ so that $K_n(y-z)=K_n(x-z)$ and hence the integral in~\eqref{e.kernel2} is zero. From this fact and the estimate in~\eqref{e.Tnf1}, it follows that
\[
\frac{1}{q^{l-1}}\int_{|y-x|<q^l}|T_n f(y)-I_l(x)|\,dy\leq C M_s f(x).
\]
Hence,~\eqref{e.TnMs} is satisfied with $\alpha=I_l(x)$. This completes the proof of the lemma.
\end{proof}
	
\begin{proof}[Proof of {\rm(b)} implies {\rm(a)}]
First we observe that, it is enough to show that inequality~\eqref{nec} holds for all balls $B$ with $|B|\leq 1$. In fact, let $B$ be any ball with $|B|>1$, then $|B|=q^k$ for some $k\geq 1$. Hence, $B$ can be written as a disjoint union of $q^k$ cosets of $\D$ as $B=\bigcup_{i=1}^{q^k}(u(l_i)+\D)$, where $l_i\in\N_0$. We observe that
\[
\frac{1}{|B|}\int_{B}w(x)\,dx=\frac{1}{|B|}\sum_{i=1}^{q^k}\int_{\D} w(x)\,dx=\int_{\D}w(x)\,dx,
\] 
since $w$ is $\Lambda$-periodic. This reduces to the case when $|B|=1$. Therefore, we assume that $|B|\leq 1$. Then, $|B|=q^{-r}$ for some $r\in\N_0$ and $B\subset u(l)+\D$ for some $l\in\N_0$. Let $f$ be a non-negative function on $B$ and $0$ on $\bigr(u(l)+\D\bigl)\setminus B$. Extend $f$ to $K$ $\Lambda$-periodically. Now, for any $x\in B$, we have $B=x+\P^r$. If $y\in B$, then $y-x\in\P^r$. Using the fact that $D_{q^r}=q^r{\bf 1}_{\P^r}$, $r\geq 0$ (see Lemma~6.7, Chapter~II in~\cite{Taib}), we get
\[
S_{q^r}f(x)=\int_B f(y)D_{q^r}(x-y)\,dy=\int_B f(y)q^r\,dy=\frac{1}{|B|} \int_{B}f(y)\,dy.
\] 
Hence, by~\eqref{weightS_nf}, we get
\[
\Bigl(\int_{B}w(x)\,dx\Bigr)\Bigl(\frac{1}{|B|}\int_{B}f(y)\,dy\Bigr)^p \leq C\int_B|f(x)|^p w(x)\,dx.
\] 
Then, by a standard argument as in page 247 in~\cite{HMW}, we get~\eqref{nec} from the above inequality.
\end{proof}

\begin{proof}[Proof of {\rm (b)} implies {\rm (c)}]
First, note that $w\in L^1(\D)$ as we have already proved that (b) implies (a). We have also  observed in the beginning of section~\ref{s.proof} that if $f\in L^p(\D, w)$, then $f\in L^1(\D)$. Let $V=\{\sum_k c_k\chi_k:k\in\N_0\}$, the vector space of all finite linear combination of characters of $\D$. Then, for any $g \in V$, there exists $n\in\N_0$ large enough, depending on $g$, such that $S_n g=g$. Therefore, it suffices to show that the space $V$ is dense in $L^p(\D,w)$. 

In the proof (a) implies (b), we have shown that $C(\D)$ is dense in $L^p(\D,w)$. Next, we show that $V$ is dense in $C(\D)$ in the $L^\infty$-norm. In fact, for any $f\in C(\D)$, $l\geq 0$ and $x\in\D$, we have, 
\[
|S_{q^l}f(x)-f(x)|=\Bigl|q^l\int_{|z-x|\leq q^{-l}}\bigl(f(x-z)-f(x)\bigr)\,dz\Bigr|.
\]
Now, from the uniform continuity of $f\in C(\D)$, for any $\epsilon>0$, there exists $k>0$ such that $|f(x-z)-f(x)|<\epsilon$ whenever $|z| \leq q^{-k}$. Using this in the above equation, we see that $|S_{q^l} f(x)-f(x)|<\epsilon$ for $l\geq k$, independent of $x$. Actually, here the convergence is uniform. Since $w\in L^1(\D)$, it is easy to see that the space $V$ is dense in $L^p(\D,w)$ and hence (c) follows.
\end{proof}
 
\begin{proof}[Proof of {\rm (c)} implies {\rm (b)}]
Suppose (c) is true. An argument similar to the one in page~246 of~\cite{HMW} shows that both $w$ and $w^{-\frac{1}{p-1}}$ are in $L^1(\D)$, and hence can be omitted. We need to show that the maps $S_n:L^p(\D,w)\rightarrow L^p(\D,w)$ are uniformly bounded. By uniform boundedness principle, it is enough to show that $\sup_n\|S_n f\|_{L^p(\D,w)}<\infty$ for every $f\in L^p(\D,w)$. Proof of this fact can also be adapted from that in~\cite{HMW} with suitable modifications.  
\end{proof}


\section{Application to Schauder bases}\label{s.appl}
Let $K$ be a local field of positive characteristic and $\varphi\in L^2(K)$. Define
\[
V_\varphi=\overline{\rm span}\{\varphi(\cdot-u(k)): k\in\N_0\},
\]
the closure in $L^2(K)$ of the finite linear combinations of translates of $\varphi$ by elements of $\Lambda=\{u(k): k\in\N_0\}$. Such a space is called a \emph{principal shift-invariant space}. In general, a closed subspace $V$ of $L^2(K)$ is called a \emph{shift-invariant space} if $f(\cdot-u(k))\in V$ for all $f\in V$ and $k\in\N_0$. Shift-invariant spaces play a very important role in the study of wavelets. For example, in a multiresolution analysis, the space $V_0$ is a principal shift-invariant space and the wavelet space $W_0$ is, in general, a shift-invariant space. In~\cite{BB}, we used the properties of shift-invariant spaces to provide a characterization of wavelets in a local field of positive characteristic.

In this section, we characterize all $\varphi\in L^2(K)$ for which the system of translates $\{\varphi(\cdot-u(k)): k\in\N_0\}$ forms a Schauder basis for $V_\varphi$. Here, $\N_0$ is ordered with the usual order, i.e., as $0, 1, 2,\dots$. The reason for taking $K$ to be of positive characteristic is that the translation set $\Lambda=\{u(k): k\in\N_0\}$ forms a subgroup of the additive group $K^+$ (see Proposition~\ref{p.un}) so that the periodization of $|\hat{\varphi}|^2$, given by
\[
w_\varphi(\xi)=\sum_{k\in\N_0}|\hat{\varphi}(\xi+u(k))|^2, 
\]
is $\Lambda$-periodic. Indeed, by Proposition~\ref{p.un}(c), for $l\in\N_0$, we have
\[
w_\varphi(\xi+u(l))=\sum_{k\in\N_0}|\hat{\varphi}(\xi+u(l)+u(k))|^2=\sum_{k\in\N_0}|\hat{\varphi}(\xi+u(k))|^2=w_\varphi(\xi)
\]
for a.e.\,$\xi\in K$.

It can be shown that the map $J_\varphi:L^2(\D,w_\varphi)\rightarrow V_\varphi$, given by $J_\varphi f=(f\hat\varphi)^\vee$, is an isometry, where $f^\vee$ is the inverse Fourier transform of $f$. For a proof of this fact, we refer to~\cite{BB}. 

Note that $(J_\varphi\overline{\chi}_k)^\wedge=\overline{\chi}_k\hat\varphi=[\varphi(\cdot-u(k))]^\wedge$ so that $J_\varphi$ maps $\overline{\chi}_k$ to $\varphi(\cdot-u(k))$. Thus, various properties of $\{\varphi(\cdot-u(k)): k\in\N_0\}$ on $V_\varphi$ correspond to similar properties of the system $\{\chi_k:k\in\N_0\}$ on $L^2(\D,w_\varphi)$. Therefore, our original problem is equivalent to the problem of finding conditions on $w_\varphi$ so that $\{\chi_k:k\in\N_0\}$ forms a Schauder basis for $L^2(\D,w_\varphi)$. 

Let us recall some standard facts about bases in a Banach space $\mathbb{B}$. A sequence $\{x_k: k\in\N_0\}$ of elements of $\mathbb{B}$ is called a \emph{Schauder basis} for $\mathbb{B}$ if for every $x\in\mathbb{B}$ there exists a unique sequence $\{\alpha_k: k\in\N_0\}$ of scalars such that
\[
x=\sum_{k\in\N_0}\alpha_kx_k,
\]
where the partial sums of the series converge in the norm of $\mathbb{B}$, that is,
\[
\lim_{N\rightarrow\infty}\Bigl\|x-\sum_{k=0}^N\alpha_kx_k\Bigr\|=0.
\]

Let $\{x_n:n\in\N_0\}$ be a sequence in a Hilbert space $\mathbb{H}$. A sequence $\{\tilde x_n:n\in\N_0\}$ in $\mathbb{H}$ is said to be \emph{biorthogonal} to $\{x_n:n\in\N_0\}$ if $\langle x_k, \tilde x_l\rangle=\delta_{k,l}$ for all $k,l\in\N_0$. It is easy to verify that if $\{x_n:n\in\N_0\}$ is complete in $\mathbb{H}$, that is, if $\overline{\rm span}\{x_n:n\in\N_0\}=\mathbb{H}$, then there is a unique sequence $\{\tilde x_n:n\in\N_0\}\subset\mathbb{H}$ which is biorthogonal to $\{x_n:n\in\N_0\}$. Such a sequence is called the \emph{biorthogonal dual} of $\{x_n:n\in\N_0\}$. Every Schauder basis has a unique biorthogonal dual. 

Let $\mathbb{H}=V_\varphi$. Suppose there exists $\tilde\varphi\in V_\varphi$ such that $\langle\varphi(\cdot-u(k)), \tilde\varphi\rangle=\delta_{k,0}$ for all $k\in\N_0$. Now,
\[
\langle\varphi(\cdot-u(k)), \tilde\varphi(\cdot-u(l))\rangle
=\langle\varphi(\cdot-(u(k)-u(l))), \tilde\varphi\rangle
\]
If $k=l$, then the above inner product is equal to $\langle\varphi, \tilde\varphi\rangle=1$. If $k\not=l$, then $0\not=u(k)-u(l)=u(m)$ for some $m\in\N_0$, by Proposition~\ref{p.un}. Hence, $m\not=0$. So, the inner product is equal to $\delta_{m,0}=0$. Thus, $\langle\varphi(\cdot-u(k)), \tilde\varphi(\cdot-u(l))\rangle=\delta_{k,l}$. That is, if there exists $\tilde\varphi\in V_\varphi$ such that $\langle\varphi(\cdot-u(k)), \tilde\varphi\rangle=\delta_{k,0}$ for all $k\in\N_0$, then $\{\tilde\varphi(\cdot-u(k)):k\in\N_0\}$ is a biorthogonal dual of $\{\varphi(\cdot-u(k)):k\in\N_0\}$. The function $\tilde\varphi$ will then be called the \emph{canonical dual function} to $\varphi$. As above, if it exists, then it is unique. We will need the following result. For a proof, we refer to~\cite{Sin} (see Theorem~4.1, Chapter~1).

\begin{lemma}\label{l.schauder}
A complete sequence $\{x_n:n\in\N_0\}$ with biorthogonal dual $\{\tilde x_n:n\in\N_0\}$ is a Schauder basis for $\mathbb{H}$ if and only if the partial sum operators
\[
s_n(x)=\sum_{k=0}^{n-1}\langle x,y_k\rangle x_k
\]
are uniformly bounded in $\mathbb{H}$.
\end{lemma}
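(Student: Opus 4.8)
The plan is to prove the two implications separately, using the completeness of $\{x_n\}$ and the biorthogonality relations $\langle x_k,\tilde x_l\rangle=\delta_{k,l}$ throughout; here I read the functionals $y_k$ appearing in the statement as the biorthogonal dual $\tilde x_k$. The central observation, valid in either direction, is that whenever a norm-convergent expansion $x=\sum_k\alpha_k x_k$ holds, applying the continuous functional $\langle\cdot,\tilde x_l\rangle$ term by term forces $\alpha_l=\langle x,\tilde x_l\rangle$. This simultaneously pins down the coefficients as the biorthogonal coefficients and yields uniqueness of the expansion. Thus a Schauder expansion, if it exists, must read $x=\sum_k\langle x,\tilde x_k\rangle x_k=\lim_n s_n(x)$, and conversely proving $s_n(x)\to x$ for every $x$ establishes the basis property.

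For the forward direction, suppose $\{x_n\}$ is a Schauder basis. Each $s_n$ is a bounded linear operator, being a finite combination of the bounded functionals $\langle\cdot,\tilde x_k\rangle$ paired with the fixed vectors $x_k$, so that $\|s_n(x)\|\le\bigl(\sum_{k=0}^{n-1}\|\tilde x_k\|\,\|x_k\|\bigr)\|x\|$. By the central observation, the unique expansion of $x$ has coefficients $\langle x,\tilde x_k\rangle$, whence $s_n(x)\to x$ for every $x\in\mathbb H$. In particular $\sup_n\|s_n(x)\|<\infty$ pointwise, and the uniform boundedness principle yields $\sup_n\|s_n\|<\infty$.

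For the reverse direction, assume $M:=\sup_n\|s_n\|<\infty$. I would first check convergence on the dense subspace $\mathrm{span}\{x_k:k\in\N_0\}$, which is dense by completeness: for a finite combination $x=\sum_{j\in F}c_j x_j$, biorthogonality gives $\langle x,\tilde x_k\rangle=c_k$, so that $s_n(x)=\sum_{k=0}^{n-1}c_k x_k$ equals $x$ as soon as $n>\max F$. A standard three-term estimate then extends this to all of $\mathbb H$: given $x$ and $\varepsilon>0$, choose $z$ in the span with $\|x-z\|$ small and bound
\[
\|s_n(x)-x\|\le\|s_n(x-z)\|+\|s_n(z)-z\|+\|z-x\|\le (M+1)\|x-z\|+\|s_n(z)-z\|,
\]
where the middle term vanishes for $n$ large. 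Hence $s_n(x)\to x$, that is $x=\sum_k\langle x,\tilde x_k\rangle x_k$, and uniqueness of this expansion follows again from the central observation. This exhibits $\{x_n\}$ as a Schauder basis.

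The proof is essentially a packaging of Banach--Steinhaus with a density argument, so no single step is genuinely hard; the point requiring the most care is the identification of the Schauder expansion coefficients with the biorthogonal coefficients $\langle x,\tilde x_k\rangle$, since this is precisely what makes the abstract partial sum operators $s_n$ coincide with the truncations of the Schauder expansion and thereby links the two notions. One should also confirm at the outset that each $s_n$ is bounded, so that Banach--Steinhaus applies in the forward direction.
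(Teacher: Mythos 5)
Your proof is correct, and it is essentially the standard argument the paper invokes by citation: the paper gives no proof of this lemma, referring instead to Singer (\emph{Bases in Banach Spaces I}, Theorem~4.1, Chapter~1), where the same Banach--Steinhaus-plus-density scheme with coefficient identification via the biorthogonal functionals is used. You also correctly resolved the statement's typo by reading $y_k$ as $\tilde x_k$, so nothing further is needed.
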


The following result provides a necessary and sufficient condition for the existence of a canonical dual.

\begin{proposition}\label{p.cdual}
Let $\varphi\in L^2(K)$. There exists a canonical dual $\tilde\varphi$ of $\varphi$ in $V_\varphi$ if and only if $\frac{1}{w_\varphi}\in L^1(\D)$. In this case, $\tilde\varphi=(\frac{1}{w_\varphi}\hat\varphi)^\vee$.
\end{proposition}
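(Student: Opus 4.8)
The plan is to work on the ``transform side'' using the isometry $J_\varphi\colon L^2(\D,w_\varphi)\to V_\varphi$, under which $\overline\chi_k\mapsto\varphi(\cdot-u(k))$, and to translate the statement about $\tilde\varphi\in V_\varphi$ into a statement about $L^2(\D,w_\varphi)$. Since $J_\varphi$ is a surjective isometry, $\tilde\varphi$ lies in $V_\varphi$ if and only if $\tilde\varphi=J_\varphi g=(g\hat\varphi)^\vee$ for some $g\in L^2(\D,w_\varphi)$, and the biorthogonality condition $\langle\varphi(\cdot-u(k)),\tilde\varphi\rangle=\delta_{k,0}$ pulls back to $\langle\overline\chi_k,g\rangle_{L^2(\D,w_\varphi)}=\delta_{k,0}$ for all $k\in\N_0$. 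Thus the whole problem is reduced to: there exists $g\in L^2(\D,w_\varphi)$ with $\langle\overline\chi_k,g\rangle_{L^2(\D,w_\varphi)}=\delta_{k,0}$ for all $k$ if and only if $1/w_\varphi\in L^1(\D)$.

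First I would compute the weighted inner product explicitly:
\[
\langle\overline\chi_k,g\rangle_{L^2(\D,w_\varphi)}=\int_\D \overline\chi_k(\xi)\,\overline{g(\xi)}\,w_\varphi(\xi)\,d\xi=\overline{\bigl(\widehat{g\,w_\varphi}\bigr)(u(k))}\,,
\]
so the biorthogonality conditions say precisely that all Fourier coefficients of $g\,w_\varphi$ vanish except the zeroth, which equals $1$. Since $\{\chi_{u(k)}\}$ is an orthonormal basis of $L^2(\D)$ (Proposition~\ref{p.com}), this forces $g\,w_\varphi\equiv 1$ a.e.\ on $\D$, i.e.\ $g=1/w_\varphi$. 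Hence a candidate $g$ exists and is \emph{uniquely determined}, and the only question is whether this $g=1/w_\varphi$ actually belongs to $L^2(\D,w_\varphi)$.

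Next I would check that membership $g=1/w_\varphi\in L^2(\D,w_\varphi)$ is equivalent to $1/w_\varphi\in L^1(\D)$: indeed
\[
\int_\D |g(\xi)|^2\,w_\varphi(\xi)\,d\xi=\int_\D \frac{1}{w_\varphi(\xi)^2}\,w_\varphi(\xi)\,d\xi=\int_\D\frac{1}{w_\varphi(\xi)}\,d\xi\,,
\]
which is finite exactly when $1/w_\varphi\in L^1(\D)$. This gives both directions at once: if $1/w_\varphi\in L^1(\D)$, then $g=1/w_\varphi\in L^2(\D,w_\varphi)$, so $\tilde\varphi=J_\varphi g=(\tfrac{1}{w_\varphi}\hat\varphi)^\vee$ is the canonical dual; conversely, if a canonical dual exists, the uniqueness argument above forces $g=1/w_\varphi\in L^2(\D,w_\varphi)$, whence $1/w_\varphi\in L^1(\D)$.

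The one point requiring care — and the main obstacle — is the rigorous justification that the vanishing-of-Fourier-coefficients computation really identifies $g\,w_\varphi$ with the constant $1$, since $g\,w_\varphi$ must first be shown to be integrable so that its Fourier coefficients are defined and the completeness of $\{\chi_{u(k)}\}$ applies. For this I would note that $g\in L^2(\D,w_\varphi)$ together with $w_\varphi\in L^1(\D)$ (which holds since $w_\varphi$ is the periodization of $|\hat\varphi|^2$ and $\varphi\in L^2(K)$ gives $\int_\D w_\varphi=\|\varphi\|_2^2<\infty$) yields, by Cauchy--Schwarz,
\[
\int_\D |g\,w_\varphi|\,d\xi=\int_\D |g|\,w_\varphi\,d\xi\leq\Bigl(\int_\D|g|^2 w_\varphi\,d\xi\Bigr)^{1/2}\Bigl(\int_\D w_\varphi\,d\xi\Bigr)^{1/2}<\infty\,,
\]
so $g\,w_\varphi\in L^1(\D)$ and its Fourier coefficients are legitimately defined; the uniqueness clause in Proposition~\ref{p.com} (an $L^1$ function with all Fourier coefficients zero vanishes a.e.) then pins down $g\,w_\varphi=1$ a.e.\ and establishes both the uniqueness of $\tilde\varphi$ and the stated formula.
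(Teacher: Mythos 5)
Your proof is correct and takes essentially the same route as the paper's: both reduce via the isometry $J_\varphi$ to finding $g\in L^2(\D,w_\varphi)$ with $\langle\overline\chi_k,g\rangle_{L^2(\D,w_\varphi)}=\delta_{k,0}$ for all $k$, show that this forces $g\,w_\varphi=1$ a.e.\ by uniqueness of Fourier coefficients, and observe that $1/w_\varphi\in L^2(\D,w_\varphi)$ precisely when $1/w_\varphi\in L^1(\D)$. Your additional Cauchy--Schwarz step verifying $g\,w_\varphi\in L^1(\D)$ (so the uniqueness theorem legitimately applies) is a point the paper leaves implicit, and your one blemish --- identifying $\langle\overline\chi_k,g\rangle_{L^2(\D,w_\varphi)}$ with $\overline{\widehat{g\,w_\varphi}(u(k))}$ when the character index should really be $-u(k)$ --- is harmless, since $\{-u(k):k\in\N_0\}=\{u(k):k\in\N_0\}$ in positive characteristic (Proposition~\ref{p.un}).
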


\begin{proof}
Since the map $J_\varphi:L^2(\D,w_\varphi)\rightarrow V_\varphi$ is an isometry, $\tilde\varphi\in V_\varphi$ if and only if there exists a unique $m$ in $L^2(\D,w_\varphi)$ such that $\widehat{\tilde\varphi}=m\hat\varphi$. Moreover, to be a canonical dual, $\tilde\varphi$ must satisfy $\langle\varphi(\cdot-u(k)), \tilde\varphi\rangle=\delta_{k,0}$ for all $k\in\N_0$. But
\begin{eqnarray*}
\langle\varphi(\cdot-u(k)), \tilde\varphi\rangle
& = & \int_K\hat\varphi(\xi)\overline{\chi_k(\xi)}\overline{\widehat{\tilde\varphi}(\xi)}\,d\xi \\
& = & \int_K\overline{m(\xi)}|\hat\varphi(\xi)|^2 \overline{\chi_k(\xi)}\,d\xi \\
& = & \int_{\D}\overline{m(\xi)}w_\varphi(\xi)\overline{\chi_k(\xi)}\,d\xi \\
& = & (\overline{m}w_\varphi)^\wedge(u(k)).  
\end{eqnarray*}
Thus, the $k$th Fourier coefficient of $\overline{m}w_{\varphi}$  is equal to $\delta_{k,0}$ for all $k\in\N_0$. This will happen
if and only if $\overline{m}w_{\varphi}=1$ for a.e.\,$\xi\in\D$. Since $w_{\varphi}$ is real-valued, we have $\overline{m}=m=\frac{1}{w{_\varphi}}$. Finally, $m=\frac{1}{w_\varphi}\in L^2(\D,w_\varphi)$ if and only if $\int_{\D}\frac{1}{w_\varphi^2(\xi)}w_\varphi(\xi)       \,d\xi=\int_{\D}\frac{1}{w_\varphi(\xi)}\,d\xi<\infty$ if and only
if $\frac{1}{w_\varphi}\in L^1(\D)$.
\end{proof}

A weight $w$ is said to be an \emph{$A_2(\D)$ weight} if $w$ is $\Lambda$-periodic and there exists a constant $C>0$ such that
\[
\Bigl(\frac{1}{|B|}\int_{B}w(x)\,dx\Bigr)\Bigl(\frac{1}{|B|}\int_{S}w(x)^{-1}\,dx\Bigr)\leq C
\]
for all balls $B\subseteq\D$. In this case, we say that $w\in A_2(\D)$. 

We are now ready to prove Theorem $\ref{sc}$.

\begin{proof} [Proof of Theorem \ref{sc}]
Let $\{\varphi(\cdot-u(k)):k\in\N_0\}$ be a Schauder basis for $V_\varphi$. Then, since $J_\varphi$ is an isometry, $\{\chi_k: k\in\N_0\}$ is a Schauder basis for $L^2(\D, w_\varphi)$. Let $\{z_k: k\in\N_0\}$ be the biorthogonal dual of $\{\chi_k: k\in\N_0\}$ in $L^2(\D, w_\varphi)$. By Proposition~\ref{p.cdual}, $\frac{1}{w_\varphi}\in L^1(\D)$. In particular, $w_\varphi>0$ a.e. We have,
\[
\delta_{k,l}
=\langle \chi_k,z_l\rangle_{L^2(\D, w_\varphi)}
=\int_{\D}\chi_k(\xi)\overline{z_l}(\xi) w_\varphi(\xi)\,d\xi= \int_{\D}z_l(\xi)w_\varphi(\xi)\overline{\chi_k}(\xi)\,d\xi. 
\]
Hence, the function $z_lw_\varphi$ has all but the $l$th Fourier coefficient are zero. By the uniqueness of Fourier coefficients, we have $z_lw_\varphi=\chi_l$, $l\in\N_0$. Now, an easy computation shows that for any $f\in L^2(\D, w_\varphi)$, we have 
\begin{equation}\label{e.pso}
\langle f, z_k\rangle_{L^2(\D, w_\varphi)}=\langle f,\chi_k\rangle_{L^2(\D)}.
\end{equation}
Now, we define 
\[
s_n f=\sum_{k=0}^{n-1}\langle f,z_k\rangle_{L^2(\D, w_\varphi)} \chi_k.
\]
Since $\{\chi_k: k\in\N_0\}$ is a Schauder basis for $L^2(\D, w_\varphi)$ with biorthogonal dual $\{z_k: k\in\N_0\}$, by Lemma~\ref{l.schauder}, the partial sum operators $s_n:L^2(\D, w_\varphi)\rightarrow L^2(\D, w_\varphi)$, $n\in\N_0$, are uniformly bounded. But, from~\eqref{e.pso}, we see that $s_nf=S_nf$, the usual partial sums of the Fourier series of $f$. Hence, $S_n:L^2(\D, w_\varphi)\rightarrow L^2(\D, w_\varphi)$ are uniformly bounded. Therefore, by Theorem~\ref{main}, it follows that $w_\varphi\in A_2(\D)$.

Conversely, suppose that $w_\varphi\in A_2(\D)$. Then $w_\varphi>0$ a.e.\,and $\frac{1}{w_\varphi}\in L^1(\D)$. Hence, $\{z_k=\frac{\chi_k}{w_\varphi}: k\in\N_0\}$ is the biorthogonal dual of the complete system $\{\chi_k: k\in\N_0\}$. Again, since $\langle f, z_k\rangle_{L^2(\D, w_\varphi)}=\langle f,\chi_k\rangle_{L^2(\D)}$, we see that the operator $s_n$ coincides with the Fourier partial sum operator $S_n$. By Theorem~\ref{main}, $S_n:L^2(\D, w_\varphi)\rightarrow L^2(\D, w_\varphi)$ are uniformly bounded. Hence, $s_n:L^2(\D, w_\varphi)\rightarrow L^2(\D, w_\varphi)$ are also uniformly bounded. Again, by Lemma~\ref{l.schauder}, $\{\chi_k: k\in\N_0\}$ is a Schauder basis for $L^2(\D, w_\varphi)$ which, in turn, shows that $\{\varphi(\cdot-u(k)): k\in\N_0\}$ is a Schauder basis for $V_\varphi$.
\end{proof}

We conclude the article with a proof of Theorem~\ref{onb}.

\begin{proof}[Proof of Theorem~\ref{onb}]
Suppose $\Omega$ tiles $\Q_p$ by translations. Then, by Theorem~\ref{fuglede}, $\Omega$ is a spectral set. That is, there is a subset $\Gamma$ of $\Q_p$ such that $\{\chi_{\gamma}: \gamma \in\Gamma\}$ forms an orthonormal basis for $L^2(\Omega)$. We have to show that the system of translates $\{\varphi(\cdot-\gamma):\gamma\in\Gamma\}$ forms an orthonormal basis for $V(\varphi,\Gamma)$. 

Let $f\in V(\varphi,\Gamma)$. Observe that the Fourier transform $\hat{f}$ of $f$ is supported on $\Omega$ and hence $\hat{f}\in L^2(\Omega)$. Since $|\hat\varphi|=1$ on $\Omega$, $\hat f/\hat\varphi\in L^2(\Omega)$. Now, since $\{\chi_{\gamma}: \gamma\in\Gamma\}$ is an orthonormal basis for $L^2(\Omega)$, we have $\tfrac{\hat f}{\hat\varphi}=\sum_{\gamma\in\Gamma}\langle\tfrac{\hat f}{\hat\varphi},\overline{\chi}_\gamma\rangle\overline{\chi}_\gamma$. Using $|\hat\varphi|^2=1$ and $[\varphi(\cdot-\gamma)]^\wedge=\hat\varphi\overline{\chi}_\gamma$, we get
\[
\tfrac{\hat f}{\hat\varphi}
=\sum_{\gamma\in\Gamma}\bigl\langle\hat f,\tfrac{1}{\overline{\hat\varphi}}\overline{\chi}_\gamma\bigr\rangle\overline{\chi}_\gamma 
=\sum_{\gamma\in\Gamma}\langle\hat f,\hat\varphi\overline{\chi}_\gamma\rangle\overline{\chi}_\gamma 
=\sum_{\gamma\in\Gamma}\langle f,\varphi(\cdot-\gamma)\rangle\overline{\chi}_\gamma.
\]
Hence,
\[
\hat f=\sum_{\gamma\in\Gamma}\langle f,\varphi(\cdot-\gamma)\rangle\hat\varphi\overline{\chi}_\gamma=\sum_{\gamma\in\Gamma}\langle f,\varphi(\cdot-\gamma)\rangle[\varphi(\cdot-\gamma)]^\wedge.
\]
Therefore,
\[
f=\sum_{\gamma\in\Gamma}\langle f,\varphi(\cdot-\gamma)\rangle\varphi(\cdot-\gamma).
\]
To see the orthonormality of $\{\varphi(\cdot-\gamma):\gamma\in\Gamma\}$, we observe that, for $\gamma,\lambda\in\Gamma$, we have
\[
\langle\varphi(\cdot-\gamma),\varphi(\cdot-\lambda)\rangle=\langle\hat\varphi\overline{\chi}_\gamma,\hat\varphi\overline{\chi}_\lambda\rangle=\langle\overline{\chi}_\gamma,\overline{\chi}_\lambda\rangle=\delta_{\gamma,\lambda}.
\]
Here, we have used Parseval's identity and the fact that $|\hat\varphi|^2=1$ on $\Omega$.

Conversely, suppose there is a countable set $\Gamma$ for which $\{\varphi(\cdot-\gamma):\gamma\in\Gamma\}$ forms an orthonormal basis for $V(\varphi,\Gamma)$. Then, by a similar argument as above, $\{\chi_{\gamma}:\gamma\in\Gamma\}$ forms an orthonormal basis for $L^2(\Omega)$ and then by Theorem~\ref{fuglede}, we conclude that $\Omega$ tiles $\Q_p$ by translations.
\end{proof}


\end{document}